\newcommand{\mmN}{\mathcal{N}}
\newcommand{\mmE}{\mathcal{E}}
\newcommand{\mmG}{\mathcal{G}}
\renewcommand{\k}{\kappa}
\newcommand{\B}{\mathcal{B}}
\newcommand{\R}{{\mathbb R}}
\newcommand{\Rnn}{\mathbb{R}_{\geq 0}}
\newcommand{\Rnng}{R_{\geq 0}}
\newcommand{\st}{\,\mid \,}
\DeclareMathOperator{\im}{im} 
\DeclareMathOperator{\supp}{supp}
\newtheorem{lemma}{Lemma}
\newtheorem{proposition}{Proposition}
\newtheorem{theorem}{Theorem}
\newtheorem{corollary}{Corollary}
\theoremstyle{definition}
\newtheorem{remark}{Remark}
\newtheorem{definition}{Definition}
\newtheorem{example}{Example}
\begin{document}

\title{Graphical criteria for positive solutions to linear systems}

\author{
Meritxell S\'aez$^1$, Elisenda Feliu$^1$, and Carsten Wiuf$^1$
}

\footnotetext[1]{Department of Mathematical Sciences, University of Copenhagen, Universitetsparken 5, 2100 Copenhagen, Denmark}
\footnotetext[2]{This work was funded by the Danish Research Council and the Lundbeck Foundation, Denmark} 

 \date{\today}

\maketitle

\begin{abstract}
We study linear systems of equations with coefficients in a generic partially ordered ring $R$ and a unique solution, and seek conditions  for the solution to be nonnegative,  that is, every component of the solution is a quotient of two nonnegative elements in $R$. The requirement of a nonnegative solution arises typically  in  applications, such as in biology and ecology, where quantities of interest are concentrations and abundances. We provide novel conditions on a labeled multidigraph  associated with the linear system that guarantee   the solution to be nonnegative. Furthermore, we study a generalization of the first class of linear systems,  where  the coefficient matrix has a specific block form and  provide analogous conditions for nonnegativity of the solution, similarly based  on a labeled multidigraph. The latter scenario arises naturally in   chemical reaction network theory, when studying full or partial parameterizations of the positive part of the steady state variety of a  polynomial dynamical system in the concentrations of the molecular species.

\medskip
\textbf{Keywords: }
 linear system, positive solution, spanning forest, matrix-tree theorem, chemical reaction networks, steady state parameterization
\end{abstract}

\section{Introduction}
A classical problem in applied mathematics is to determine the solutions to a linear system of equations. In applications, it is often the case  that only positive or nonnegative real solutions to the system are meaningful, and  criteria to assert positivity and nonnegativity of the solutions  have thus been developed \cite{Dines26,Kay85,FarRin2000,Roman2005}.  Nonnegativity is  required, for example, in the case of equilibria concentrations of molecular species in biochemistry \cite{feinbergnotes,Fel_elim,gunawardena-notes}, species abundances at steady state in ecology \cite{may}, stationary distributions of Markov chains in probability theory \cite{norris}, and  in Birch's theorem for maximum likelihood estimation in statistics \cite{Pachter-Sturmfels3}.  Also in economics and game theory are equilibria often required to be nonnegative \cite{Judd12,Gand09}.
Many of these situations arise from considering dynamical systems where the state variables are restricted to the positive or nonnegative orthant.

We consider a linear system of equations $Ax+b=0$,  with $\det A\not=0$ and where the coefficients of $A,b$ are in a generic partially ordered ring $R$. By adding an extra row to $A$ such that the column sums are zero, we  might associate in a natural way  a Laplacian $L$ with the linear system and the corresponding (so-called) labeled canonical  multidigraph. The components of the solution to the linear system are rational functions on the entries of $A$ and $b$. Their numerator and denominator can, by means of the Matrix-Tree Theorem \cite{Tutte-matrixtree}, be expressed as polynomials on the labels of the rooted spanning trees of the canonical multidigraph, and in fact, of any labeled multidigraph with Laplacian $L$. If the multidigraph is what we call a \emph{P-graph} (Definition~\ref{def:Pgraph}), then we show that the solution to $Ax+b=0$ is nonnegative. These conditions  are readily fulfilled if the off-diagonal elements of $L$ (not only $A$) are nonnegative.

In the applications motivating this work, the P-graph condition is hardly met. This happens for example in the study of biochemical reaction networks, where the system of interest arises from computing the equilibrium points of a dynamical system constrained to certain invariant linear varieties. However, in many instances the solution is nevertheless nonnegative. In the second part of the paper we explore this other scenario. We consider systems  of a specific block form, 
compatible with the application setting, and derive conditions on another (related) multidigraph that ensure nonnegativity of the solution. In this situation, the solution is expressed as a rational function in the labels of rooted spanning forests of the multidigraph. The second scenario is an extension of the generic case given in the first part of the paper.

Typically in applications,  the entries of $A,b$ depend on  parameters and inputs 
 that cannot be fixed beforehand, but must be treated as ``unknown'' or symbolic variables. Our approach  accommodates this since solutions are given as rational functions in these entries. Alternatively, one might view the parameters as functions, and apply the results for the ring of real valued functions. In this way, we can study the nonnegativity of solutions without fixing parameters and inputs.

One natural application of our results, which motivated this work, is within biochemical reaction network theory and concerns the parameterization of the positive part of the algebraic variety of steady states. The concentrations of the molecular species in the reactions evolve according to a non-linear ODE system of equations and the steady states are found by equating the equations of the ODE system to zero \cite{Fel_elim}. Our approach is tailored to obtain a full or partial parameterization of the set of positive steady states, possibly constrained  to linear invariant varieties, in terms of the parameters of the system and some variables. These parameterizations can be obtained for a large class of non-linear reaction networks. The results given here generalise and complement earlier strategies in finding these parameterizations by means of linear elimination \cite{gunawardena-linear,TG-rational,fwptm,Fel_elim}. We give an example towards the end of the paper.

The paper is organized as follows. 
In Section \ref{sec:prelim} we introduce  notation, background material and present the solution to a linear system in terms of the spanning trees of an associated multidigraph. In Section \ref{sec:possolution} we give conditions in terms of the associated multidigraph to decide the nonnegativity of a solution. In Section \ref{sec:secondsystem} we develop theory for the second scenario and provide examples. Finally, in Section \ref{sec:proofs}, we prove the two main results of Section \ref{sec:secondsystem}.

\section{Preliminaries} \label{sec:prelim}

Let $\R_{\geq 0}$ and $\R_{>0}$ be  the sets of nonnegative and positive real numbers, respectively.

For a ring $R$, let $R^n$ be the $n$-tuples of elements in $R$ and  $R^{n\times m}$ be the $n$ times $m$ matrices with entries in $R$.  If $R$ is a partially ordered  ring, then the notions of positive, negative, nonpositive and nonnegative elements are well defined  \cite{NS_notes}. These elements form the sets $R_{>0}, R_{<0}, R_{\leq 0}, R_{\geq 0}$, respectively. Some examples of  $R$ are $\mathbb{Q}$, $\R$ and the real functions defined on a domain $\Omega$ ordered by pointwise comparison.
 
The support of an  element $x\in R^n$ is defined as $\supp(x)=\{i\mid x_i\neq 0\}.$

The cardinality of a finite set $F$ is denoted by $|F|$, the  power set by $\mathcal{P}(F)$ and the disjoint union of two sets $E,F$  by  $E\sqcup F$. For finite pairwise disjoint sets  $F_1,\dots,F_k$, we define the following set of  unordered $k$-tuples of $F_1\cup \dots \cup F_k$:
\[
F_1\odot \cdots \odot F_k=\underset{i\in\{1,\dots,k\}}{\mathlarger{\mathlarger\odot}}\!\!F_i = \Big\{\{w_1,\dots,w_k\}\st w_j\in F_j\text{ for }j=1,\dots,k\Big\}.
\]
The $k$-tuples of $F_1\odot \cdots \odot F_k$ have $k$ elements.

\subsection{Multidigraphs and the Matrix-Tree Theorem}
In this subsection we introduce  notation and concepts related to algebraic graph theory.
A \emph{multidigraph} $\mmG$ is a pair of finite sets $(\mmN, \mathcal{E})$, called the set of nodes and the set of edges, respectively, 
together  with two functions, $s\colon\mathcal{E} \rightarrow \mmN$ and $t\colon\mathcal{E} \rightarrow \mmN$, called the source and the target function, respectively.
The function $s$ (resp. $t$) assigns to each edge the source (resp. target) node of the edge. An edge $e$ is a self-edge if $t(e)=s(e)$, and two edges $e_1,e_2$ are parallel edges if $t(e_1)=t(e_2)$ and $s(e_1)=s(e_2)$. 

A \textbf{cycle} is a closed \emph{directed} path with no repeated nodes. 
A \textbf{tree} $\tau$ is a directed subgraph of $\mmG$ such that the underlying undirected graph is connected and acyclic. A tree $\tau$ is \emph{rooted} at the node $N$, if $N$ is the only node without outgoing edges. In that case, there is a unique directed path from every node in $\tau$ to $N$. A \textbf{forest} $\zeta$ is a directed subgraph of $\mmG$ whose connected components are trees. A tree (resp.~forest) is called a \emph{spanning tree} (resp.~\emph{spanning forest}) if its node set is $\mmN$. 
For a spanning tree $\tau$ (resp.~a spanning forest $\zeta$) we use $\tau$ (resp.~$\zeta$) to refer to the edge set of the graph and to the graph itself indistinctly, as the node set in this case is $\mmN$.  The number of edges of a spanning forest $\zeta$ is $|\mmN|$ minus the number of connected components of $\zeta$. 

If $\pi\colon\mmE \rightarrow R$ is a labeling of $\mmG$ with values in a ring $R$, then any sub-multidigraph $\mmG'$ of $\mmG$ inherits a labeling from $\mmG$.  A labeling  is extended to  $\mathcal{P}(\mmE)$ by
\[
\pi\colon\mathcal{P}(\mmE)\to R,\quad \pi(\mmE')=\prod_{e\in\mmE'}\pi(e) \quad \text{for}\quad\mmE'\subseteq \mmE.
\]

In the following we assume that $\mathcal{G}=(\mmN, \mmE)$ is a (labeled) multidigraph with no self-loops and node set $\mmN=\{1,\dots,m+1\}$ for some $m\ge 0$.
For two sets $F,B\subseteq \mmN$ with $|F|=|B|$, let $\boldsymbol{\Theta_\mmG(F,B)}$ be the set of spanning forests of $\mmG$ such that:
\begin{enumerate}[(i)]
\item each forest has $|B|$ connected components (trees),
\item each tree contains a node  in $F$ and is rooted at a node in $B$.
\end{enumerate}
Each forest $\zeta\in\Theta_\mmG(F,B)$ induces a bijection $g_\zeta\colon F\rightarrow B$ with $g_\zeta(i)=j$ if $j$ is the root of the tree containing $i$. A pair $(i_1,i_2)\in F\times F$ is an \emph{inversion} in $g_\zeta$ if $i_1<i_2$ and $g_\zeta(i_1)>g_\zeta(i_2)$. We denote by $I(g_\zeta)$ the number of inversions in $g_\zeta$ and define
\begin{equation}\label{ypsilon}
\widetilde{\Upsilon}_\mmG(F,B)=\sum_{\zeta\in \Theta_\mmG(F,B)}(-1)^{I(g_\zeta)}\pi(\zeta)\quad \text{    and }\quad\Upsilon_\mmG(F,B)=\sum_{\zeta\in \Theta_\mmG(F,B)}\pi(\zeta),
\end{equation}
where the empty sum is defined as zero. 

Let $\mmE_{ji}=\{e\in\mmE\st s(e)=j,\ t(e)=i\}$ be the set of parallel edges with source $j$ and target $i$. 
The \textbf{Laplacian} of $\mathcal{G}$ is the $(m+1)\times (m+1)$ matrix $L=(L_{ij})$ with
\[
L_{ij}= \sum\limits_{e\in \mmE_{ji}}\pi(e) \quad \text{for }i\neq j, \quad\text{ and }\quad
L_{ii}=-\sum\limits_{k\neq i}L_{ki}.
\]

The column sums of the Laplacian of $\mathcal{G}$ are zero by construction. Any square matrix $L\in R^{(m+1)\times (m+1)}$ with zero column sums can be realized as the Laplacian  of a labeled multidigraph. If this is the case we say that $L$ is a Laplacian. The \textbf{canonical multidigraph}  with Laplacian $L$ is defined as the labeled multidigraph  with  node set $\mmN=\{1,\dots,m+1\}$ and one edge $j\to i$ with label $L_{ij}$ for each nonzero entry $L_{ij}\neq 0$,  for $i\not=j$. This multidigraph has neither parallel edges nor self-loops, thus it is a digraph.  All other labeled  multidigraphs with the same Laplacian can be obtained from the canonical multidigraph by adding self-edges and splitting edges into parallel edges while preserving the  label sums. 

\begin{theorem}{({\bf All Minors Matrix-Tree Theorem} \cite[Th 3.1]{Moon94})}\label{thm:matrixtree}
Let $L$ be the Laplacian  of a   labeled   multidigraph $\mathcal{G}$ with $m+1$ nodes and let $B,F\subseteq \mmN$ be such that $|B|=|F|$. Let $L_{(F,B)}$ be the minor obtained from $L$ by removing the rows with index in $F$ and the columns with index in $B$.  Then
\[
L_{(F,B)}=(-1)^{\epsilon}\ \widetilde{\Upsilon}_\mmG(F,B),\quad \text{where}\quad\epsilon= m+1-|F|+\sum_{i\in F}i+\sum_{j\in B}j.
\]
\end{theorem}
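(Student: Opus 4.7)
The plan is a combinatorial proof that expands $L_{(F,B)}$ via the Leibniz formula, interprets each resulting monomial as a signed weighted edge-subgraph, and applies a sign-reversing involution to cancel contributions from subgraphs containing directed cycles, leaving exactly the weighted forests enumerated by $\widetilde{\Upsilon}_\mmG(F,B)$.

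Setting $n = m+1$, I would first write $L_{(F,B)} = (-1)^{\alpha}\det M$, where $M$ is the $(n-|F|)\times(n-|F|)$ submatrix of $L$ with rows indexed in increasing order by $\bar F = \mmN \setminus F$ and columns by $\bar B = \mmN \setminus B$. The reordering sign $\alpha$ can be shown to be congruent to $\sum_{i\in F} i + \sum_{j\in B} j$ modulo $2$, up to a global shift that merges with the other sign sources below. The Leibniz expansion yields
\[
\det M = \sum_{\sigma\colon \bar F \to \bar B} \mathrm{sgn}(\sigma) \prod_{i \in \bar F} L_{i,\sigma(i)}.
\]
I would then substitute $L_{i,\sigma(i)} = \sum_{e \in \mmE_{\sigma(i),i}} \pi(e)$ when $\sigma(i) \ne i$ and $L_{ii} = -\sum_{e\colon s(e)=i} \pi(e)$ when $\sigma(i) = i$. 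After distributing, each term becomes a signed monomial $\pm \pi(H)$ for an edge set $H$ in which every node $j \in \bar B$ has exactly one outgoing edge (the target being $\sigma^{-1}(j) \in \bar F$ when $\sigma^{-1}(j) \ne j$, and any node $\ne j$ with an extra minus sign when $\sigma^{-1}(j) = j$).

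Since nodes in $B$ have no outgoing edges in $H$ while nodes in $\bar B$ have exactly one, each connected component of $H$ is either a tree rooted at a node of $B$ or a tree feeding into a directed cycle contained in $\bar B$. I would define a sign-reversing involution on the configurations containing at least one cycle by locating the minimum index $v$ on any cycle of $H$ and toggling between the two consistent interpretations of $v$'s outgoing edge --- either as a forced-target $\sigma$-non-fixed contribution (no extra sign, but the target must lie in $\bar F$) or as a free-target $-L_{vv}$ contribution (any target, carrying a minus sign). The monomial $\pi(H)$ is preserved while the sign flips (one transposition in $\sigma$ and one minus sign are exchanged), so all cyclic contributions cancel in pairs. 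The survivors are precisely the spanning forests $\zeta \in \Theta_\mmG(F,B)$, with $\sigma$ restricted to non-fixed points recovering the induced bijection $g_\zeta\colon F \to B$.

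It remains to verify that the aggregate sign on each surviving $\zeta$ is $(-1)^{\epsilon + I(g_\zeta)}$. Three independent sources contribute: the Leibniz signature $\mathrm{sgn}(\sigma)$, which reduces to $(-1)^{I(g_\zeta)}$ once increasing orderings on $\bar F, \bar B$ are fixed and $\sigma$ is recognized as conjugate to $g_\zeta$; the parity of the number of $-L_{ii}$ substitutions used on the surviving forest, which is $n-|F|$ modulo $2$; and the reordering sign $\alpha \equiv \sum_{i \in F} i + \sum_{j \in B} j$ modulo $2$. Their product is $(-1)^{\epsilon}(-1)^{I(g_\zeta)}$, yielding the theorem after summation over $\zeta$. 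The main obstacle is this sign matching: the cycle-cancellation mechanism is classical (in the spirit of Tutte's and Chaiken's proofs), but aligning three independent sign contributions with the inversion statistic $I(g_\zeta)$ into the single expression $\epsilon = n - |F| + \sum_{i \in F} i + \sum_{j \in B} j$ requires meticulous bookkeeping.
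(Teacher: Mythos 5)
The paper does not actually prove this statement: it is imported from Moon \cite[Th 3.1]{Moon94}, with the extension from digraphs to multidigraphs delegated to Lemma 1 of \cite{Saez:reduction}. So you are supplying a proof where the paper supplies a citation. Your skeleton --- Leibniz expansion of the minor, interpretation of each monomial as an edge set in which every node of $\mmN\setminus B$ has exactly one outgoing edge, and a sign-reversing involution cancelling the configurations that contain a directed cycle --- is the classical Chaiken/Moon argument and is the right strategy.

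The difficulty is that the sign bookkeeping, which you correctly identify as the main obstacle, is not merely deferred: the specific values you assign to the three sign sources are wrong, so the sketch does not close as written. (a) With the paper's definition, $L_{(F,B)}$ is the determinant of the submatrix with the surviving rows and columns kept in increasing order, so your reordering sign $\alpha$ is $0$; the factor $(-1)^{\sum_{i\in F}i+\sum_{j\in B}j}$ has to emerge from comparing $\mathrm{sgn}(\sigma)$, for $\sigma\colon\mmN\setminus F\to\mmN\setminus B$, with the inversion count $I(g_\zeta)$ of the \emph{complementary} bijection $F\to B$ --- it cannot be produced by reordering rows and columns. (b) The claim that the number of $-L_{ii}$ substitutions on a surviving forest has parity $m+1-|F|$ is false: for the single edge $1\to 2$ with label $a$ (so $m+1=2$), $F=\{1\}$, $B=\{2\}$, the unique surviving term is the off-diagonal entry $L_{21}=a$ with zero diagonal substitutions, while $m+1-|F|=1$; the number of fixed points of $\sigma$ depends on where the $F$-node sits inside each tree and is only controlled jointly with $\mathrm{sgn}(\sigma)$. (c) You never explain why every surviving tree contains a node of $F$, i.e.\ why the survivors are exactly $\Theta_\mmG(F,B)$ and not all $B$-rooted spanning forests; this follows from the fact that the edges of each component must be matched bijectively to its nodes outside $F$ (forcing exactly one $F$-node per tree and making $\sigma$ unique on each survivor), and it is an essential step rather than a byproduct. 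Finally, the involution cannot toggle a single vertex: consistency of $\sigma$ forces an entire cycle to switch between a $\sigma$-cycle (sign $(-1)^{k-1}$) and $k$ fixed points (sign $(-1)^{k}$), and one must also check that a cycle meeting $F$ admits no realization at all. None of this is fatal to the approach, but the sign analysis must be redone from scratch before the stated $\epsilon$ is actually obtained.
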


The All Minors Matrix-Tree Theorem is usually stated for digraphs. Using Lemma 1 in \cite{Saez:reduction} it holds also for multidigraphs.  When $|B|=|F|=1$, Theorem \ref{thm:matrixtree} is the usual Matrix-Tree Theorem  extended to multidigraphs \cite{Tutte-matrixtree}. 

Define 
\[
\Theta_\mmG(B)=\bigcup\limits_{\begin{subarray}{c}F\subseteq \{1,\dots,m+1\}\\ |B|=|F| \end{subarray}} \Theta_\mmG(F,B) \quad \text{ and }\quad \Upsilon_\mmG(B)=\sum\limits_{\zeta\in\Theta_\mmG(B)} \pi(\zeta).
\]
The set $\Theta_\mmG(B)$ consists of spanning forests of $\mmG$ with $|B|$ connected components and each component is a tree rooted at a node   in $B$.

If $i,j\in\mmN$,   then $\Theta_\mmG(\{i\},\{j\})$ is the set of spanning trees rooted at $j$, hence it is independent of $i$.  
We denote the set by $\Theta_\mmG(j)$ and  let $\Upsilon_\mmG(j)=\sum_{\tau\in\Theta_\mmG(j)}\pi(\tau).$  
For  $\tau\in\Theta_\mmG(j)$, we have $I(g_\tau)=0$. If $\mmG$ is not connected, then $\Theta_\mmG(j)=\emptyset$ and $\Upsilon_\mmG(j)=0$.

\subsection{Linear systems}

Let $R$ be a ring. Consider a linear system $Ax+b=0$, where $A=(a_{ij})\in R^{m\times m}$ is a nonsingular  matrix, $b\in R^m$ is a vector of independent terms, and $x$ is an $m$-dimensional vector of unknowns.  Define the $(m+1)\times (m+1)$ matrix $L$ by
\begin{equation}\label{eq:Laplacian}
L=\left(\begin{array}{c|c}
A & b\\ \hline
\cdots & \cdot
\end{array} \right), \quad\text{where}\quad  L_{ij}=\begin{cases} 
\quad a_{ij} & \text{for }i,j\leq m\\
\quad b_i & \text{for }i\leq m \text{ and }j=m+1\\
-\sum\limits_{k=1}^m a_{kj} & \text{for } i=m+1\text{ and }j\leq m   \\
-\sum\limits_{k=1}^m b_{k} & \text{for } i=j=m+1,
\end{cases}
\end{equation}
such that $L$ has zero column sums. Therefore, $L$ is the Laplacian of a  labeled multidigraph $\mathcal{G}$ with $m+1$ nodes.

The solution to the linear system $Ax+b=0$ can be expressed in terms of the labels of the spanning trees of any  labeled multidigraph with Laplacian $L$ as follows. Let $A_{i\rightarrow b}$ be the matrix obtained by replacing the $i$-th column of $A$ with the vector $b$, and let $A_{i}| b$ be the matrix obtained by removing the $i$-th column of $A$ and taking $b$ as the $m$-th column. 

\begin{proposition}\label{prop:solCMT}
Let $A\in R^{m\times m}$, $b\in R^m$ and $x=(x_1,\dots,x_m)$. Let $L$ be as in \eqref{eq:Laplacian} and $\mmG$ a  labeled multidigraph with Laplacian $L$. 
Then, 
$\det(A)=(-1)^{m}\Upsilon_\mmG(m+1)$. Further, if  $\det(A)\neq 0$, then the solution to the linear system $Ax+b=0$ is 
\[
x_i=\dfrac{\Upsilon_\mmG(i)}{\Upsilon_\mmG(m+1)}\,\,\in\,\,\widehat{R},\,\qquad i=1,\dots,m,
\]
where $\widehat{R}\supseteq R$ is an extension ring of $R$ for which the above quotient is defined.
\end{proposition}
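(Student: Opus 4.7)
The plan is to invoke the All Minors Matrix-Tree Theorem (Theorem~\ref{thm:matrixtree}) twice and combine the outputs with Cramer's rule; the only real work is sign bookkeeping, since both the theorem and the passage from $A_i\mid b$ to $A_{i\to b}$ introduce sign factors that must cancel.

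First I would observe that $A$ is the minor $L_{(\{m+1\},\{m+1\})}$ of $L$: removing the last row and last column of $L$ leaves exactly the upper-left $m\times m$ block, which is $A$. Applying Theorem~\ref{thm:matrixtree} with $F=B=\{m+1\}$ gives $\epsilon=m+1-1+(m+1)+(m+1)=3m+2$, so $(-1)^\epsilon=(-1)^m$. Every element of $\Theta_\mmG(\{m+1\},\{m+1\})$ is a single spanning tree rooted at $m+1$, so the associated bijection has no inversions and $\widetilde{\Upsilon}_\mmG(\{m+1\},\{m+1\})=\Upsilon_\mmG(m+1)$. This immediately gives $\det(A)=(-1)^m\Upsilon_\mmG(m+1)$.

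For the second assertion I would apply Cramer's rule in the form $x_i=-\det(A_{i\to b})/\det(A)$, which comes from rewriting the system as $Ax=-b$. Moving the last column of $A_i\mid b$ back to position $i$ costs $m-i$ transpositions, so $\det(A_{i\to b})=(-1)^{m-i}\det(A_i\mid b)$; and $A_i\mid b$ is precisely the submatrix $L_{(\{m+1\},\{i\})}$ of $L$ (columns $1,\dots,i-1,i+1,\dots,m+1$ of $L$ restricted to rows $1,\dots,m$, in that order). Theorem~\ref{thm:matrixtree} with $F=\{m+1\}$ and $B=\{i\}$ yields $\epsilon=2m+1+i$, and again $\Theta_\mmG(\{m+1\},\{i\})=\Theta_\mmG(i)$ with $I(g_\tau)=0$, so $\det(A_i\mid b)=(-1)^{i+1}\Upsilon_\mmG(i)$. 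Collecting factors, $\det(A_{i\to b})=(-1)^{m+1}\Upsilon_\mmG(i)$, and substituting into Cramer's formula together with the expression for $\det(A)$ produces $x_i=\Upsilon_\mmG(i)/\Upsilon_\mmG(m+1)$, valid in any extension ring $\widehat{R}$ in which $\Upsilon_\mmG(m+1)$ is invertible.

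The main obstacle is purely the sign arithmetic: one needs to check that $(-1)^{m-i}$ from the column swap and $(-1)^{\epsilon}=(-1)^{i+1}$ from Theorem~\ref{thm:matrixtree} combine with Cramer's minus sign to $(-1)^{m}$, exactly cancelling the sign in the denominator. No further ideas are needed beyond Theorem~\ref{thm:matrixtree} and the identifications of $A$ and $A_i\mid b$ as minors of $L$.
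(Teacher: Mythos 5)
Your proposal is correct and follows essentially the same route as the paper: identify $A=L_{(\{m+1\},\{m+1\})}$ and $A_i\mid b=L_{(\{m+1\},\{i\})}$, apply Theorem~\ref{thm:matrixtree} with zero inversions to get $(-1)^m\Upsilon_\mmG(m+1)$ and $(-1)^{i+1}\Upsilon_\mmG(i)$, and combine via Cramer's rule $x_i=-\det(A_{i\to b})/\det(A)$ with the column-swap factor $(-1)^{m-i}$. All the sign arithmetic checks out and matches the paper's computation.
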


\begin{proof}
By Theorem \ref{thm:matrixtree} we have 
\begin{align*}
\det(A)&=L_{(\{m+1\},\{m+1\})}=(-1)^{m+1-1+m+1+m+1}\Upsilon_\mmG(m+1)=(-1)^m\Upsilon_\mmG(m+1),  \\
\det(A_{i}| b)&=L_{(\{m+1\},\{i\})}=(-1)^{m+1-1+m+1+i}\Upsilon_\mmG(i)=(-1)^{i+1}\Upsilon_\mmG(i),
\end{align*}
 as the number of inversions is  $0$ in these two cases.
By Cramer's rule we have 
\[
x_i=\dfrac{\det(A_{i\rightarrow -b})}{\det(A)}=
\dfrac{-\det(A_{i\rightarrow b})}{\det(A)}=
\dfrac{(-1)^{m-i+1}\det(A_{i}| b)}{\det(A)}=\dfrac{(-1)^m\Upsilon_\mmG(i)}{(-1)^m\Upsilon_\mmG(m+1)}.
\]
\end{proof}

\refstepcounter{theorem}\label{ref:split2example}
\newcounter{mycounterSplit}
\renewcommand{\themycounterSplit}{\getrefnumber{ref:split2example}\,(part\,\Alph{mycounterSplit})}
\newtheorem{myexampleSplit}[mycounterSplit]{Example}

\begin{myexampleSplit}\label{ex:split2}
Let $z_1,\dots,z_5\in R\setminus \{0\}$.  Consider the linear system  with $m=3$,
\[
\left(\begin{array}{ccc}
-z_2 & \phantom{-}0 & \phantom{-}z_4 \\ 
-z_1 & -z_3 & \phantom{-}0  \\ 
-z_2 & \phantom{-}z_3 & -z_4 \\
\end{array} \right)\hspace{-0.1cm}
\left(\begin{matrix}x_1\\ x_2\\ x_3 \end{matrix} \right) +
\left(\begin{matrix}0\\ z_5\\ 0 \end{matrix} \right)=
\left(\begin{matrix}0\\ 0\\ 0 \end{matrix} \right).
\]
The matrix $L$ and the corresponding canonical multidigraph are

\begin{minipage}{0.52\linewidth}
\[
L=\left(\begin{array}{ccc|c}
-z_2 & 0 & \phantom{-}z_4 &  0\\ 
-z_1 & -z_3 & 0 &z_{5} \\ 
-z_2 & \phantom{-}z_3 & -z_4 &0\\
\hline
z_1+2z_2 & 0 & 0 & -z_{5}
\end{array} \right)
\]
\end{minipage}
\begin{minipage}{0.4\linewidth}
\begin{center}
\begin{tikzpicture}[inner sep=1.2pt]
\node (v31) [shape=circle] at (-2,0) {$3$};
\node (v11) [shape=circle] at (0,0) {$1$};
\node (v21) [shape=circle] at (2,0) {$2$.};
\node (*1) [shape=circle] at (0,-1.5) {$4$};

\draw[->] (v11) to[out=0,in=180] node[above,sloped]{\footnotesize $-z_1$}(v21);
\draw[->] (v11) to[out=170,in=10] node[above,sloped]{\footnotesize $-z_2$}(v31);
\draw[->] (v11) to[out=270,in=90] node[left]{\footnotesize $z_1+2z_2$}(*1);
\draw[->] (v21) to[out=155,in=25] node[above,sloped]{\footnotesize $z_3$}(v31);
\draw[->] (*1) to[out=20,in=240] node[right]{\footnotesize $\ z_{5}$}(v21);
\draw[->] (v31) to[out=-10,in=190] node[below,sloped]{\footnotesize $z_4$}(v11);
\end{tikzpicture}
\end{center}
\end{minipage}

\medskip
\noindent Therefore, $\Upsilon_\mmG(2)=\left(z_1+ 2\,z_{{2}}\right) z_4 z_5-z_1z_4z_5=2z_2z_4z_5$ and $\Upsilon_\mmG(4)=-\det(A)=\left( z_{{1}}+2\,z_{{2}} \right)z_3z_4$. The terms $\Upsilon_\mmG(1)$ and $\Upsilon_\mmG(3)$ are similarly found to obtain the solution
\[
x_1={\frac {z_{{5}}}{z_{{1}}+2\,z_{{2}}}},\qquad 
x_2={\frac { 2\,z_{{2}}  z_5}{\left( z_{{1}}+2\,z_{{2}} \right)z_3 }},\qquad 
x_3={\frac { z_{{2}}z_5 }{ \left( z_{{1}}+2\,z_{{2}}\right) z_{{4}}}}.
\]  
\end{myexampleSplit}

\section{Positive solution to a linear system}\label{sec:possolution}

Let $R$ be a partially ordered ring. We are interested in conditions that ensure the  solution to the linear system $Ax+b=0$ in $\widehat{R}^m$  is nonnegative (cf. Proposition~\ref{prop:solCMT}).
If the off-diagonal entries of $L$ are  in $\Rnng$, then this is always the case. Indeed, the edge labels  
of the canonical multidigraph $\mathcal{G}$ with Laplacian  $L$ are in $\Rnng$. Hence by Proposition \ref{prop:solCMT}  and the definition of $\Upsilon_\mmG(j)$, the solution is in $\widehat{R}^m_{\geq 0}$. 

\smallskip
This condition is  however  not necessary.  Consider  Example \ref{ex:split2} with $R=\R$ and $z_i\geq 0$. Not all off-diagonal entries of $L$ are in $\Rnn$, but the  solution is nonetheless in $\Rnn^3$ if $\det(A)\neq 0$. 

In the following we consider labeled multidigraphs with no zero labels ($\pi(e)\not=0$ for all $e\in\mmE$) and such that  the label of each edge is either positive or negative.  In this case,  the labels of the spanning trees are also either nonnegative or nonpositive. 
Assuming $\mmG$ is a P-graph (Definition \ref{def:Pgraph} below), we will show that any nonpositive term in $\Upsilon_\mmG(i)$ corresponding to a spanning tree with nonpositive label cancels with a sum of labels of spanning trees with nonnegative labels.
Definition \ref{def:Pgraph}  below guarantees  that any nonpositive label of a spanning tree  rooted at $i$ cancels out in  $\Upsilon_\mmG(i)$ with a sum of labels of spanning trees with nonnegative labels. Further,  $\Upsilon_\mmG(i)\in\Rnng$ and the solution to the linear system given in Proposition \ref{prop:solCMT} is  in  $\widehat{R}^m_{\geq 0}$.    This is what happens in Example \ref{ex:split2}.

For a multidigraph $\mmG=(\mmN,\mmE)$   with labeling $\pi\colon \mmE\rightarrow R$, we let
 \[
\mmE^-=\big\{e\in\mmE\st \pi(e)\in R_{>0}\big\}\quad\text{and}\quad \mmE^+=\big\{e\in\mmE\st \pi(e)\in R_{<0}\big\}
\] 
denote the set of edges with positive and negative labels, respectively.

\begin{definition}\label{def:Pgraph}
Let $\mmG=(\mmN,\mmE)$ be a multidigraph with labeling $\pi\colon \mmE\rightarrow R$. Let $\mu\colon \mmE^-\rightarrow \mathcal{P}(\mathcal{E}^+)$ be a map. The pair $(\mmG,\mu)$ is an  \textbf{edge partition} if  
\begin{enumerate}[(i)]
\item \label{cond1Pgraph} $\mmE=\mmE^+\sqcup \mmE^-$.
\item\label{cond2Pgraph} All cycles in $\mmG$ contain at most one edge in $\mmE^-$.
\item The map $\mu$ is such that for every $e\in\mmE^-$
\begin{enumerate}[(a)]
\item\label{cond3aPgraph} if $e'\in\mu(e)$, then $s(e)=s(e')$,
\item\label{cond3bPgraph}  if $e'\in\mu(e)$, then every cycle containing $e'$ contains $t(e)$,
\item\label{cond3cPgraph} if $e\neq e'$, then $\mu(e)\cap\mu(e')=\emptyset$. 
\end{enumerate}
\end{enumerate} 

\medskip
We say $\mmG$  a \textbf{P-graph} if there is an edge partition $(\mmG,\mu)$ such that  for every $e\in\mmE^-$,
\begin{enumerate}[(iv)]
\item \label{cond4Pgraph} $\pi(e)+\sum\limits_{e'\in\mu(e)}\pi(e')\in\Rnng$. 
\end{enumerate} 
In this case we say that the map $\mu$ is \emph{associated} with the P-graph $\mmG$.
\end{definition}

Defintion \ref{def:Pgraph}(\ref{cond3bPgraph}) is equivalent to the condition that any path from $t(e')$ to $s(e')=s(e)$ contains $t(e)$. If the labels of a labeled multidigraph $\mmG$ are all positive, then $\mmG$ is trivially a P-graph.  Note that there does not necessarily  exist a P-graph with a given Laplacian.

\begin{myexampleSplit}\label{ex:split3}
Consider the multidigraph $\mmG$ with Laplacian $L$ in Example \ref{ex:split2}. Since there is only one edge in $\mmE^+$ with source $1$ but two edges in $\mmE^-$ with the same source, condition (\hyperref[cond4Pgraph]{iv}) is not satisfied for any choice of $\mu$. Hence  $\mmG$ is not a P-graph.  The following multidigraph is a P-graph with Laplacian $L$. An associated map $\mu$ is given.

\begin{minipage}{0.45\linewidth}
\begin{center}
\begin{tikzpicture}[inner sep=1.2pt]
\clip(-2.5,-2) rectangle (2.5,1);
\node (v31) [shape=circle] at (-2,0) {$3$};
\node (v11) [shape=circle] at (0,0) {$1$};
\node (v21) [shape=circle] at (2,0) {$2$};
\node (*1) [shape=circle] at (0,-1.7) {$4$};

\draw[->] (v11) to[out=0,in=180] node[above,sloped]{\footnotesize $-z_1$}(v21);
\draw[->] (v11) to[out=170,in=10] node[above,sloped]{\footnotesize $-z_2$}(v31);
\draw[->] (v11) to[out=260,in=100] node[left]{\footnotesize $z_1$}(*1);
\draw[->] (v11) to[out=280,in=80] node[right]{\footnotesize $2z_2\ $}(*1);
\draw[->] (v21) to[out=150,in=30] node[above,sloped]{\footnotesize $z_3$}(v31);
\draw[->] (*1) to[out=30,in=240] node[right]{\footnotesize $\ z_{5}$}(v21);
\draw[->] (v31) to[out=-10,in=190] node[below,sloped]{\footnotesize $z_4$}(v11);
\end{tikzpicture}
\end{center}
\end{minipage}
\begin{minipage}{0.45\linewidth}
\vspace{-30pt}
\[
\mu(1\ce{->[-z_1]}2)=\{1\ce{->[z_1]} 4 \},
\]
\[
 \mu(1\ce{->[-z_2]}3)=\{1\ce{->[2z_2]} 4 \}.
\]
\end{minipage}

\noindent
The entry $z_1+2z_2$ in $L$ is made into two  edges in the multidigraph. This is necessary in order to satisfy Definition \ref{def:Pgraph}\eqref{cond3cPgraph} and (\hyperref[cond4Pgraph]{iv})  simultaneously.
\end{myexampleSplit}

For some rings $R$, such as  the ring of real functions over a real domain, it is possible to write any element as the sum of a positive and a negative element. In this case, given
 a labeled multidigraph with Laplacian $L$, a new labeled multidigraph with the same Laplacian can be made by splitting each edge (if necessary) into two parallel edges, one with  positive label and one with negative label.  Hence Definition \ref{def:Pgraph}\eqref{cond1Pgraph} can always be fulfilled. The key part of Definition \ref{def:Pgraph} is the simultaneous fulfilment of  condition (\ref{cond3cPgraph}) and (\hyperref[cond4Pgraph]{iv}).

\begin{remark}\label{rmk:condP}
It follows from Definition \ref{def:Pgraph}(\hyperref[cond4Pgraph]{iv}), that a necessary condition for a multidigraph to be a P-graph is that the label sums over the edges with the same source are nonnegative, that is, the diagonal entries of the Laplacian are nonpositive. 
\end{remark}

\smallskip
Let $(\mmG,\mu)$ be an edge partition and define the set $\im\mu$ by
\[
\im\mu=\bigcup_{e\in\mmE^-}\mu(e)\subseteq \mmE^+.
\]
By Definition~\ref{def:Pgraph}(\ref{cond3cPgraph}), each edge $e\in\im\mu$ belongs to exactly one set $\mu(e')$.  We might thus define the ``inverse'' of $\mu$ by
$$
\mu^{*}\colon\im\mu\rightarrow \mmE^-, \quad  \mu^{*}(e')=e\ \text{ if }\ e'\in\mu(e).
$$

We will show that the spanning forests in $\Theta_\mmG(B)$, $B\subseteq \mmN$,  can  be obtained from a smaller set of spanning forests $\Lambda_\mmG(B)\subseteq \Theta_\mmG(B)$ by replacing edges in $\mmE^-$ with edges in $\im\mu\subseteq \mmE^+$.   The spanning forests of $\Lambda_\mmG(B)$ are characterized by having as many edges as possible in $\mmE^-$, in a sense that will be made precise in Lemma \ref{lemma:maxEzeta}. This further allows us to characterize the labels of the spanning forests in $\Theta_\mmG(B)$ in terms of the associated map $\mu$, see Lemma \ref{lemma:decompPrePsi} and Theorem \ref{thm:posol} below.  

In the next two lemmas we will make use of the following fact. Given a spanning forest $\zeta\in \Theta_\mmG(B)$, let $\zeta'$ be a submultidigraph obtained by replacing some edges of $\zeta$ by other edges with the same source. We claim that  if $\zeta'$ does not contain any cycle, then  it belongs to $\Theta_\mmG(B)$.  Indeed, if this is so, then $\zeta'$ has $|B|$ connected components since it has $m-|B|$ edges and is a spanning forest. Further, by construction, there is not an edge with  source in $B$. Hence each connected component is a tree rooted at a node   in $B$.

 Recall that  we identify a spanning tree $\zeta$  with its set of edges, and thus $\zeta\cap \im\mu$ denotes the set of edges of $\zeta$ in $\im \mu$. 

\begin{lemma}\label{lemma:maxEzeta}
Let $(\mmG,\mu)$ be an edge partition, 
 $B\subseteq \mmN$, $\zeta\in \Theta_\mmG(B)$, and  
\[
\mmE_\zeta=\Big\{E\subseteq \zeta\cap \im\mu \mid (\zeta\setminus E)\cup \mu^{*}(E)\in \Theta_\mmG(B)\Big\}\, \subseteq \mathcal{P}(\mmE^+).
\]
The set $\mmE_\zeta$ is closed under union, that is, if $E_1,E_2\in \mmE_\zeta$, then $E_1\cup E_2\in \mmE_\zeta$. 
\end{lemma}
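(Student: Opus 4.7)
The plan is to show that $\zeta' := (\zeta \setminus (E_1 \cup E_2)) \cup \mu^*(E_1 \cup E_2)$ lies in $\Theta_\mmG(B)$. By Definition~\ref{def:Pgraph}(\ref{cond3aPgraph}), each edge $\mu^*(e)$ shares its source with $e$. Combining this with the observation that any two distinct edges of the rooted spanning forest $\zeta$ have distinct sources, it follows that $\mu^*$ is injective on $\zeta \cap \im\mu$ and that no $\mu^*(e)$ with $e\in\zeta\cap\im\mu$ can already be an edge of $\zeta$; consequently $\zeta'$ is obtained from $\zeta$ by a genuine source-preserving one-for-one edge replacement, so in particular $|\zeta'|=|\zeta|$. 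By the observation preceding the lemma, it therefore suffices to show that $\zeta'$ is acyclic.

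I would argue by contradiction: suppose $\zeta'$ contains a cycle $C$. Condition (\ref{cond2Pgraph}) of Definition~\ref{def:Pgraph} forces $C$ to contain at most one edge from $\mmE^-$. Every edge of $\zeta'\cap\mmE^+$ belongs to $\zeta\setminus(E_1\cup E_2)\subseteq \zeta$, so if $C$ had no $\mmE^-$-edge then $C\subseteq \zeta$, contradicting that $\zeta$ is a forest. Hence $C$ contains exactly one edge $e^*\in\mmE^-$, while every other edge of $C$ already lies in $\zeta$. If in addition $e^*\in\zeta$, then $C\subseteq \zeta$, again a contradiction.

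The remaining case is $e^*\in\mu^*(E_1\cup E_2)$, so $e^*=\mu^*(e)$ for some $e\in E_i$ with $i\in\{1,2\}$. I would then observe that $C$ in fact sits inside $(\zeta\setminus E_i)\cup\mu^*(E_i)$: its unique $\mmE^-$-edge $e^*$ lies in $\mu^*(E_i)$, while its remaining edges lie in $\zeta\setminus(E_1\cup E_2)\subseteq \zeta\setminus E_i$. But this contradicts $E_i\in \mmE_\zeta$, since the latter guarantees $(\zeta\setminus E_i)\cup\mu^*(E_i)\in \Theta_\mmG(B)$ and hence is acyclic.

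The main obstacle I expect is precisely this last subcase: we must route any hypothetical cycle of $\zeta'$ back into one of the two individually valid replaced forests. Condition (\ref{cond2Pgraph}), restricting cycles to at most one $\mmE^-$-edge, is what makes this localization possible, since the single negative-label switch in $\zeta'$ must come from a specific $\mu^*(E_i)$. I do not anticipate needing conditions (\ref{cond3bPgraph}) or (iv) at this combinatorial stage; they should enter only when one later translates forest memberships into claims about labels and the nonnegativity of $\Upsilon_\mmG(i)$.
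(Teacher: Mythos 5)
Your proof is correct and follows essentially the same route as the paper's: both arguments reduce the claim to acyclicity of $\zeta_3=(\zeta\setminus(E_1\cup E_2))\cup\mu^{*}(E_1\cup E_2)$ and use Definition~\ref{def:Pgraph}(\ref{cond2Pgraph}) to force any hypothetical cycle, having at most one $\mmE^-$-edge, to lie entirely inside $\zeta_1$ or $\zeta_2$, contradicting that these are forests. Your version merely spells out a few sub-cases (and the edge-count/injectivity check) that the paper leaves implicit.
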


\begin{proof} 
Consider distinct $E_1,E_2\in \mmE_\zeta$ and let $$\zeta_i=(\zeta\setminus E_i)\cup \mu^{*}(E_i)\in\Theta_\mmG(B)$$ for $i=1,2$. We claim that $E_3=E_1\cup E_2\in\mmE_\zeta$, that is, $\zeta_3=(\zeta\setminus E_3)\cup \mu^{*}(E_3)\in\Theta_\mmG(B)$. Since the image of $\mu^{*}$ belongs to $\mmE^-$, edges with positive label in $\zeta_3$ are also edges in $\zeta_1$ and $\zeta_2$ by construction.  Using that a cycle in $\mmG$ contains at most one edge in $\mmE^-$  by Definition  \ref{def:Pgraph}\eqref{cond2Pgraph}, we conclude that any cycle  in $\zeta_3$ is also  a cycle of $\zeta_1$ or $\zeta_2$. However, they do not contain  cycles as they are forests. By the argument above, this implies that $\zeta_3$ is a spanning forest in $\Theta_\mmG(B)$.
\end{proof}

Since $\zeta\cap \im\mu$ is finite, it follows from the lemma that $\mmE_\zeta$ has a unique maximum with respect to inclusion. That is, there is a set $E_\zeta\in\mmE_\zeta$ such that for all $E\in\mmE_\zeta$ it holds $E\subseteq  E_\zeta$. 

 Let $(\mmG,\mu)$ be an edge partition. For $B\subseteq \mmN$,  define the set
\begin{equation*}
\Lambda_\mmG(B)=\big\lbrace \zeta \in \Theta_\mmG(B)\st \mmE_\zeta=\emptyset\big\rbrace,
\end{equation*}
which consists of the spanning forests  that are maximal with respect to the edge replacement operation defined in Lemma \ref{lemma:maxEzeta} and thus have  the maximal number of negative labels. Note that we suppress the dependence of $\mu$ in $\Lambda_\mmG(B)$.
We define a  surjective map  by
$$\psi\colon \Theta_\mmG(B)\rightarrow\Lambda_\mmG(B),\quad  \psi(\zeta)=\big(\zeta\setminus E_\zeta\big)\cup \mu^{*}\big(E_\zeta\big).$$
This gives a partition of $\Theta_\mmG(B)$,
\begin{equation}\label{eq:decompTheta}
\Theta_\mmG(B)=\bigsqcup\limits_{\zeta\in\Lambda_\mmG(B)}\psi^{-1}(\zeta).
\end{equation}

\begin{lemma}\label{lemma:decompPrePsiEdges}
Let $(\mmG,\mu)$ be an edge partition,  $B\subseteq \mmN$, $\zeta\in\Theta_\mmG(B)$, $e\in\zeta\cap\mmE^-$ and $e'\in\mu(e)$. 
Then $$\zeta'=(\zeta\setminus \{e\})\cup \{e'\}\in\Theta_\mmG(B).$$
\end{lemma}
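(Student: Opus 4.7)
The plan is to invoke the observation recorded just before Lemma~\ref{lemma:maxEzeta}: if $\zeta\in\Theta_\mmG(B)$ and one replaces edges of $\zeta$ by other edges having the same sources, then the result is again in $\Theta_\mmG(B)$ provided it is acyclic. Definition~\ref{def:Pgraph}(\ref{cond3aPgraph}) guarantees $s(e')=s(e)$, so the lemma reduces to checking that $\zeta'=(\zeta\setminus\{e\})\cup\{e'\}$ contains no directed cycle.

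I would argue this by contradiction. Assume $\zeta'$ contains a cycle $C$. Since $\zeta$ is acyclic and $\zeta'$ differs from $\zeta$ only in the swap $e\leftrightarrow e'$, the cycle $C$ must use $e'$. Applying Definition~\ref{def:Pgraph}(\ref{cond3bPgraph}) to $e'\in\mu(e)$, every cycle through $e'$ contains $t(e)$, so $t(e)$ is a node of $C$.

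The key step is then to transport this cycle back into $\zeta$. Deleting $e'$ from $C$ yields a directed path $P$ in $\zeta\setminus\{e\}\subseteq\zeta$ from $t(e')$ to $s(e')=s(e)$ passing through $t(e)$. Let $P_2$ be the subpath of $P$ from $t(e)$ to $s(e)$; concatenating $P_2$ with the edge $e\colon s(e)\to t(e)$ of $\zeta$ produces a closed directed walk in $\zeta$ whose nodes, being the nodes of $P_2$, are distinct (inherited from $C$). Since $\mmG$ has no self-loops, $s(e)\neq t(e)$ and this walk has at least one edge other than $e$, so it is a genuine cycle in $\zeta$, contradicting $\zeta$ being a forest.

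The main obstacle is recognising that Definition~\ref{def:Pgraph}(\ref{cond3bPgraph}) is precisely what is needed to force $t(e)$ onto $C$, so that the subpath $P_2$ together with $e$ closes up inside $\zeta$; the remaining verifications — that $C$ must contain $e'$ and that $P_2\cdot e$ is a simple cycle — are routine consequences of $\zeta$ being acyclic and $\mmG$ being free of self-loops.
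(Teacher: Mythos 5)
Your proof is correct and follows essentially the same route as the paper's: assume a cycle in $\zeta'$, use Definition~\ref{def:Pgraph}(\ref{cond3bPgraph}) to place $t(e)$ on it, transport the resulting path back into $\zeta$ and close it up with $e$ to contradict acyclicity of $\zeta$. You merely spell out in more detail the steps the paper leaves implicit (the reduction to acyclicity via the observation preceding Lemma~\ref{lemma:maxEzeta}, and the verification that the closed walk is a genuine cycle).
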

\begin{proof}
By assumption $\zeta$ does not contain cycles. If $\zeta'$ contains a cycle, then it contains $e'$ and $t(e)$ by Definition \ref{def:Pgraph}\eqref{cond3bPgraph}. It implies that there is a path from $t(e)$ to $s(e')=s(e)$ in $\zeta$ as well. 
Since $\zeta$ contains $e$, there is also a cycle in $\zeta$, and we have reached a contradiction. Hence, $\zeta'\in\Theta_\mmG(B)$.
\end{proof}

\begin{lemma}\label{lemma:decompPrePsi}
With the notation introduced above, it holds that for $\zeta\in\Lambda_\mmG(B)$,
\[
\psi^{-1}(\zeta)=\left\{ (\zeta\cap \mmE^+) \cup E \st E\in  \underset{e\in\zeta\cap \mmE^-}{\mathlarger{\mathlarger\odot}} \big(\{e\}\cup \mu(e)\big)   \right\}.
\]
\end{lemma}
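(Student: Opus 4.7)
The plan is to prove the set equality by establishing both inclusions, viewing each element of the right-hand side as specifying, for every negative edge $e\in\zeta\cap\mmE^-$, whether to keep $e$ itself or to swap it for some $f\in\mu(e)$. The sets $\{e\}\cup\mu(e)$, for distinct $e\in\zeta\cap\mmE^-$, are pairwise disjoint (by Definition~\ref{def:Pgraph}(\ref{cond3cPgraph}) together with the sign convention $e\in\mmE^-$, $\mu(e)\subseteq\mmE^+$), so each $E$ in the right-hand side admits a unique labelling $E=\{f_e\}_{e\in\zeta\cap\mmE^-}$ with $f_e\in\{e\}\cup\mu(e)$.

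For the inclusion $\supseteq$, fix such an $E$, let $I=\{e\in\zeta\cap\mmE^-\st f_e\in\mu(e)\}$, and set $\zeta'=(\zeta\cap\mmE^+)\cup E$. I would first show $\zeta'\in\Theta_\mmG(B)$ by iterating Lemma~\ref{lemma:decompPrePsiEdges}: starting from $\zeta$ and replacing each $e\in I$ by $f_e$ one edge at a time, each intermediate object lies in $\Theta_\mmG(B)$ because the edge being swapped is still present and lies in $\mmE^-$. Next I would show $\psi(\zeta')=\zeta$. Put $F=E\cap\im\mu=\{f_e\st e\in I\}$. A direct check yields $(\zeta'\setminus F)\cup\mu^*(F)=\zeta\in\Theta_\mmG(B)$, so $F\in\mmE_{\zeta'}$ and hence $F\subseteq E_{\zeta'}$. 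For the reverse, assume $E_{\zeta'}\supsetneq F$ and set $F'=E_{\zeta'}\setminus F$. Since the positive edges of $\zeta'$ outside $F$ coincide with $\zeta\cap\mmE^+$, one has $F'\subseteq\zeta\cap\im\mu$, and a short set-theoretic computation gives
\[
(\zeta\setminus F')\cup\mu^*(F')=(\zeta'\setminus E_{\zeta'})\cup\mu^*(E_{\zeta'})\in\Theta_\mmG(B),
\]
so the non-empty $F'$ lies in $\mmE_\zeta$, contradicting $\zeta\in\Lambda_\mmG(B)$. Therefore $E_{\zeta'}=F$ and $\psi(\zeta')=\zeta$.

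For the inclusion $\subseteq$, take $\zeta'\in\psi^{-1}(\zeta)$, so $(\zeta'\setminus E_{\zeta'})\cup\mu^*(E_{\zeta'})=\zeta$. I would first show this union is disjoint: if some $e_0\in\mu^*(E_{\zeta'})\cap(\zeta'\setminus E_{\zeta'})$, picking $f\in E_{\zeta'}$ with $\mu^*(f)=e_0$ exhibits, by Definition~\ref{def:Pgraph}(\ref{cond3aPgraph}), two distinct outgoing edges $e_0$ and $f$ at the common source $s(e_0)=s(f)$ in the spanning forest $\zeta'$, which is impossible. Splitting into positive and negative parts then yields $\zeta'\cap\mmE^+=(\zeta\cap\mmE^+)\sqcup E_{\zeta'}$ and $\zeta'\cap\mmE^-=(\zeta\cap\mmE^-)\setminus\mu^*(E_{\zeta'})$. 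For each $e\in\zeta\cap\mmE^-$, set $f_e=e$ when $e\in\zeta'$; otherwise $e\in\mu^*(E_{\zeta'})$ and, by the same out-degree argument, there is a unique $f_e\in E_{\zeta'}\cap\mu(e)$. Then $E=\{f_e\st e\in\zeta\cap\mmE^-\}$ lies in the right-hand side and satisfies $\zeta'=(\zeta\cap\mmE^+)\cup E$.

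The main obstacle is the maximality step in the $\supseteq$ direction, namely showing that $F$ cannot be strictly contained in $E_{\zeta'}$; this is precisely where the hypothesis $\zeta\in\Lambda_\mmG(B)$ (equivalently $E_\zeta=\emptyset$) is essential, and the critical ingredient is the set-theoretic identity that transports a would-be swap on $\zeta'$ back to a swap on $\zeta$. A further subtle point is the disjointness of the union $(\zeta'\setminus E_{\zeta'})\cup\mu^*(E_{\zeta'})$ used in the $\subseteq$ direction, which hinges on condition~(\ref{cond3aPgraph}) combined with the unique-outgoing-edge property of rooted spanning forests. With these two points in hand, the bijection between patterns $E$ and preimages is essentially bookkeeping governed by~(\ref{cond3cPgraph}).
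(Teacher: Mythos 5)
Your proposal is correct and follows essentially the same route as the paper's proof: both directions rest on rewriting $\psi(\zeta')=\zeta$ as $\zeta'=(\zeta\cap \mmE^+)\cup E$ and on identifying $E_{\zeta'}$ with $E\cap\mmE^+$, with Lemma~\ref{lemma:decompPrePsiEdges} supplying membership in $\Theta_\mmG(B)$. You merely spell out two points the paper leaves implicit — the maximality argument showing $E_{\zeta'}=E\cap\im\mu$ via transporting a putative swap on $\zeta'$ back to a nonempty swap on $\zeta$, and the out-degree/disjointness bookkeeping — both of which are carried out correctly.
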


\begin{proof} 
If $\psi(\zeta')= \zeta$, then $\zeta' =  (\zeta \setminus \mu^*(E_{\zeta'}) )\cup  E_{\zeta'}$
by construction.  Since $E_{\zeta'}\subseteq \mmE^+$ and $\mu^*(E_{\zeta'})\subseteq \mmE^-$, we have
  $$\zeta'= (\zeta\cap \mmE^+) \cup E,\qquad \textrm{ with }\quad E=(\zeta'\cap \mmE^-) \cup E_{\zeta'}.$$ 
This shows the inclusion $\subseteq$ by noting that  $E_{\zeta'}\subseteq \im \mu$ and $\zeta'\cap \mmE^- \subseteq \zeta \cap \mmE^-$.
 
 To prove the  inclusion $\supseteq$ we proceed as follows. By Lemma~\ref{lemma:decompPrePsiEdges}, the set on the right consists of elements in $\Theta_\mmG(B)$.  For $\zeta'= (\zeta\cap \mmE^+) \cup E$,  we have
$E_{\zeta'}=E \cap \mmE^+$  and by the computations above, $\psi(\zeta')=\zeta$.
\end{proof}

Let $(\mmG,\mu)$ be an edge partition. Using  \eqref{eq:decompTheta} and Lemma \ref{lemma:decompPrePsi}, we conclude that  for $i\in\mmN$ it holds that
\begin{align}\label{eq:positive}
\Upsilon_\mmG(i)&=\!\!\!\sum\limits_{\zeta\in\Lambda_\mmG(i)} \ \sum\limits_{\zeta'\in\psi^{-1}(\zeta)}\!\!\!\pi(\zeta')= \!\!\!\sum\limits_{\zeta\in\Lambda_\mmG(i)}\!\!\!\pi(\zeta\cap \mmE^+)\prod\limits_{e\in\zeta\cap \mmE^-}\left(\pi(e)+\sum\limits_{e'\in \mu(e)}\pi(e')\right). 
\end{align}

We are now in position to prove the main result of the section. 

\begin{theorem}\label{thm:posol}
Let $A\in R^{m\times m}$ with $\det(A)\not=0$,  $b\in R^m$, $x=(x_1,\dots,x_m)$, and  $L$ be as in \eqref{eq:Laplacian}. 
If there exists a P-graph with Laplacian $L$, then each  component of the solution to the  linear system $Ax+b=0$ is a quotient of two terms in $\Rnng$.
\end{theorem}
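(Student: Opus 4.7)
The plan is to apply Proposition~\ref{prop:solCMT} to the P-graph $\mmG$ provided by hypothesis, and then to read off sign information from the decomposition~\eqref{eq:positive} established immediately above the theorem. First I would invoke Proposition~\ref{prop:solCMT} with $\mmG$ taken to be the given P-graph (note that $\mmG$ has the correct Laplacian $L$, which is all that proposition requires), obtaining
\[
x_i=\frac{\Upsilon_\mmG(i)}{\Upsilon_\mmG(m+1)},\qquad i=1,\dots,m.
\]
This reduces the problem to proving that $\Upsilon_\mmG(j)\in R_{\geq 0}$ for $j=i$ and for $j=m+1$.

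Next I would fix an associated map $\mu$ as in Definition~\ref{def:Pgraph} and expand $\Upsilon_\mmG(j)$ by~\eqref{eq:positive}. Every summand has the shape
\[
\pi(\zeta\cap \mmE^+)\prod_{e\in\zeta\cap \mmE^-}\Bigl(\pi(e)+\sum_{e'\in\mu(e)}\pi(e')\Bigr),
\]
and I would argue that each factor lies in $R_{\geq 0}$: the prefactor $\pi(\zeta\cap \mmE^+)$ is a product of labels of edges in $\mmE^+$, all of which lie in $R_{\geq 0}$ by the very definition of $\mmE^+$; and each remaining factor is nonnegative by Definition~\ref{def:Pgraph}(iv). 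Since $R_{\geq 0}$ is closed under finite sums and products, the conclusion $\Upsilon_\mmG(j)\in R_{\geq 0}$ follows.

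I do not foresee a substantive obstacle: the real combinatorial content has already been packaged into Lemmas~\ref{lemma:maxEzeta}--\ref{lemma:decompPrePsi}, which produced the identity~\eqref{eq:positive}, and once that identity is available the sign analysis reduces to reading off condition~(iv) of Definition~\ref{def:Pgraph}. The only small point worth care is that~\eqref{eq:positive} is stated for $j=i\in\mmN$, but the derivation via~\eqref{eq:decompTheta} and Lemma~\ref{lemma:decompPrePsi} applies verbatim to $j=m+1$, so both quantities in the quotient for $x_i$ are handled in the same stroke.
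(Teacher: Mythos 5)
Your proposal is correct and follows essentially the same route as the paper's proof: invoke Proposition~\ref{prop:solCMT} for the P-graph $\mmG$, then use the identity~\eqref{eq:positive} together with Definition~\ref{def:Pgraph}(\hyperref[cond4Pgraph]{iv}) to conclude that each $\Upsilon_\mmG(j)$, $j\in\mmN$, is a sum of nonnegative terms. Your side remark about $j=m+1$ is moot since~\eqref{eq:positive} is already stated for all $i\in\mmN=\{1,\dots,m+1\}$.
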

\begin{proof}
Let $\mmG$ be a P-graph with Laplacian $L$ and $\mu$ an associated map.  Using \eqref{eq:positive} and  Definition \ref{def:Pgraph}(\hyperref[cond4Pgraph]{iv}), $\Upsilon_\mmG(i)\in\Rnng$   for all  $i\in\mmN$, since it is a sum of nonnegative terms. The result follows from Proposition \ref{prop:solCMT}.
\end{proof}

 Using  \eqref{eq:positive} and $\pi(\zeta\cap \mmE^+)>0$ for $\zeta\in\Lambda_\mmG(i)$, we might further characterize when 
$\Upsilon_\mmG(i)$ is different from zero, and, in particular, when 
$\det(A)=(-1)^m\Upsilon_\mmG(m+1)$ is different from zero. 
\begin{proposition}\label{prop:possolgraph}
Let $A\in R^{m\times m}$,  $b\in R^m$, $x=(x_1,\dots,x_m)$, and  $L$ be as in \eqref{eq:Laplacian}. 
Assume there exists a P-graph $\mathcal{G}$ with Laplacian $L$ and an associated map $\mu$. For  $i\in  \mmN$, $\Upsilon_\mmG(i)\neq 0$ if and only if 
there exists a spanning tree $\tau\in\Lambda_\mmG(i)$  such that 
$$\pi(e)+\sum\limits_{e'\in\mu(e)}\pi(e')\,\,\in\,\, R_{>0},\qquad \textrm{for all }\quad e \in \tau\cap \mmE^-.$$
In particular, $\det(A)\neq 0$ if and only if the statement holds for $i=m+1$. 
\end{proposition}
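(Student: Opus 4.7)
The plan is to read off the statement directly from equation \eqref{eq:positive}, which already expresses $\Upsilon_\mmG(i)$ as a sum indexed by $\Lambda_\mmG(i)$:
\[
\Upsilon_\mmG(i)=\sum_{\zeta\in\Lambda_\mmG(i)} \pi(\zeta\cap \mmE^+)\prod_{e\in\zeta\cap \mmE^-}\Bigl(\pi(e)+\sum_{e'\in\mu(e)}\pi(e')\Bigr).
\]
The strategy is to show that each summand lies in $R_{\geq 0}$, determine exactly when a given summand vanishes, and then use the fact that in a partially ordered ring a sum of nonnegative elements is zero iff each term is zero.

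First I would verify that every summand is nonnegative. The factor $\pi(\zeta\cap\mmE^+)$ is a product of labels in $R_{>0}$, so it lies in $R_{>0}$ (hence is nonzero). Each factor $\pi(e)+\sum_{e'\in\mu(e)}\pi(e')$ lies in $R_{\geq 0}$ by Definition~\ref{def:Pgraph}(\hyperref[cond4Pgraph]{iv}), and therefore the product over $e\in\zeta\cap\mmE^-$ is in $R_{\geq 0}$; multiplying by the positive factor $\pi(\zeta\cap\mmE^+)$ keeps it in $R_{\geq 0}$. This is essentially already observed in the proof of Theorem~\ref{thm:posol}.

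Next I would invoke antisymmetry of the order: if $a,b\in R_{\geq 0}$ and $a+b=0$, then $a=-b\leq 0$, forcing $a=0$, and inductively any finite sum of nonnegative elements vanishes iff each summand vanishes. Consequently $\Upsilon_\mmG(i)\neq 0$ iff at least one $\zeta\in\Lambda_\mmG(i)$ contributes a nonzero term. Because $\pi(\zeta\cap \mmE^+)\in R_{>0}$ is nonzero and each factor $\pi(e)+\sum_{e'\in\mu(e)}\pi(e')$ is nonnegative, the product vanishes iff some such factor vanishes; equivalently, the summand is nonzero iff every factor is nonzero, i.e., lies in $R_{>0}$ (since $R_{\geq 0}\setminus\{0\}=R_{>0}$). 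This is precisely the stated condition, yielding the first equivalence. The final sentence about $\det(A)$ is then immediate from Proposition~\ref{prop:solCMT}, which gives $\det(A)=(-1)^m\Upsilon_\mmG(m+1)$, so $\det(A)\neq 0$ iff $\Upsilon_\mmG(m+1)\neq 0$.

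I expect no real obstacle; the substantive work has been done in setting up \eqref{eq:positive}, and the proof amounts to carefully unpacking signs and using that $R_{\geq 0}\setminus\{0\}=R_{>0}$ together with the fact that a product of two positive elements in a partially ordered ring is positive (so a product of nonnegatives is zero iff one factor is). The only subtlety worth being explicit about is that one must verify the spanning tree $\tau$ can indeed be chosen in $\Lambda_\mmG(i)$ (not merely in $\Theta_\mmG(i)$), but this is automatic from the fact that the sum in \eqref{eq:positive} is indexed by $\Lambda_\mmG(i)$.
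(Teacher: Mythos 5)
Your argument is correct and is essentially the paper's own proof: the authors justify the proposition in a single sentence by citing \eqref{eq:positive} together with $\pi(\zeta\cap \mmE^+)>0$ for $\zeta\in\Lambda_\mmG(i)$, which is exactly the decomposition you unpack, followed by the same appeal to Proposition~\ref{prop:solCMT} for the statement about $\det(A)$. The one caveat, which you share with the paper, is the implicit assumption that a product of elements of $R_{>0}$ is again in $R_{>0}$ and that a product of nonnegative elements vanishes only if some factor does; this holds in the motivating examples but is not automatic in an arbitrary partially ordered ring, so it is worth flagging as a standing hypothesis rather than a consequence of the axioms.
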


The proposition provides a graphical way to check for zero solutions as well, since $\Upsilon_\mmG(i)\neq 0$ if and only if 
  the solution to the  system $Ax+b=0$ satisfies  $x_i\neq 0$.
The proposition holds for any P-graph and any associated map, and thus holds for either all possible P-graphs and associated maps or none.
Further, for $i=m+1$, the vector $b$ plays no role, and hence in order to check whether a square matrix $A$ has nonzero determinant, one can apply the proposition with arbitrary $b$, for example $b=0$.

\begin{remark}
A known criterion for nonnegativity of the solution is the following. If $-A$ is an $M$-matrix and the entries of $b$ are nonnegative, then the solution is nonnegative, because the inverse of $-A$ has nonnegative entries \cite[Section 9.5]{hogben}. Example \ref{ex:split2} is not an $M$-matrix (and cannot be made one by reordering of columns or rows, see also Remark \ref{rem:order}). Oppositely, for $Ax+b=0$ with 
$$A=\begin{pmatrix} -4 & \phantom{-}2 \\  \phantom{-}1& -1\end{pmatrix},\quad b=\begin{pmatrix} 1 \\ 1 \end{pmatrix},$$
$-A$ is an $M$-matrix, but there is not a P-graph in this case as Definition \ref{def:Pgraph}(iv) fails. Hence the two criteria are complementary. 
\end{remark}

We conclude this section with two observations on the existence of P-graphs.
 
\begin{remark}\label{rem:order}
 In many applications the rows of the matrix $A$ (and the vector $b$) have a natural order that corresponds to the order of the variables $x_1,\ldots,x_m$. This is for example the case if $Ax+b=0$ is the equilibrium equations of a linear dynamical system $\dot{x}=Ax+b$.
 Positivity of a solution to $Ax+b=0$ is independent of the order of the rows, however, the existence of a  P-graph is not. 
As pointed out in Remark \ref{rmk:condP}, there cannot exist a P-graph with Laplacian $L$ unless the  diagonal entries of $L$ are nonpositive. This property will generally not be fulfilled if the rows are reordered (without reordering the variables accordingly), as the following example  shows with $m=2$ for two orders of the equations:
 \[
L= \left(\begin{array}{cc|c}
 -2 &  \phantom{-}1 &  \phantom{-}2 \\
 \phantom{-}1 & -2 &  \phantom{-}2 \\ \hline
  \phantom{-}1 &  \phantom{-}1 & -4
 \end{array}\right), \qquad 
 \widehat L= \left(\begin{array}{cc|c}
 -1 &  \phantom{-}2 & -2 \\
  \phantom{-}2 & -1 & -2 \\ \hline
 -1 & -1 &  \phantom{-}4
 \end{array}\right).
 \]
 The canonical multidigraph of the first example is a P-graph as the nondiagonal elements are nonnegative. The second example is obtained from the first by swapping the first two rows, followed by a multiplication with minus one to obtain nonpositive diagonal elements (Remark \ref{rmk:condP}). However this implies that the third, or $(m+1)$-th,  row also changes sign, causing a positive element in the diagonal. Hence there is not a P-graph with Laplacian $\widehat L$.

In general, a reordering of the equations and/or the variables might be convenient in order to find a P-graph corresponding to the system.
 \end{remark}

\begin{remark}\label{rk:split-merge}
Consider a P-graph $\mmG$ with associated map $\mu$ and Laplacian $L$. By splitting edges with positive labels or merging edges with negative labels, the resulting multidigraph is still a P-graph with Laplacian $L$.  

Specifically, if $\mmG$ has several parallel edges $e_1,\dots,e_\ell\in \mmE_{ji}$ with negative labels, the multidigraph $\widehat{\mmG}$ with exactly one edge $e$ from $j$ to $i$ and label $\pi(e_1)+\dots+\pi(e_\ell)$ is also a P-graph. An associated map $\widehat{\mu}$ agreeing with $\mu$ for all edges different from $e$ can be  defined as $\widehat{\mu}(e)= \cup_{i=1}^\ell \mu(e_i)$.  Here we use that $\mu(e_i)$ is also a subset of edges in $\widehat{\mmG}$. The proof is straightforward.

Similarly,   for any edge $e'\in \mmE_{ji}$ with positive label, the multidigraph 
$\widehat{\mmG}$ with $\ell$ edges  $e'_1,\dots,e'_\ell$ from $j$ to $i$ fulfilling $\pi(e'_1)+\dots + \pi(e'_\ell)=\pi(e')$ is also a P-graph with the same Laplacian. An associated map $\widehat{\mu}$ can be defined as  $\widehat{\mu}(e)=(\mu(e)\setminus \{e'\}) \cup \{e'_1,\dots,e'_\ell\}$ if $e'\in \mu(e)$ and $\widehat{\mu}(e)=\mu(e)$ otherwise, with the natural identification of edges in $\mmG$ and $\widehat{\mmG}$.
\end{remark}

\begin{lemma}
Assume $R$ is totally ordered. Let $\mmG=(\mmN,\mmE)$ be a P-graph with Laplacian $L$ such that there are two nodes $i,j\in\mmN$ with $\mmE_{ji}\not=\emptyset$ and $L_{ij}=0$, that is, the labels of the parallel edges from $j$ to $i$ sum to zero. Then there is a P-graph $\mmG'=(\mmN,\mmE')$ with Laplacian $L$ and $\mmE'_{ji}=\emptyset$. The edges of $\mmG'$ and $\mmG$ agree after potentially  splitting some edges with source $j$ into several parallel edges.
\end{lemma}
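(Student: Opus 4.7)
The plan is to define $\mmG'$ by deleting all edges in $\mmE_{ji}$ from $\mmG$ (possibly after splitting some positive edges with source $j$) and then to exhibit an associated map $\mu'$. Let $c=-\sum_{e\in\mmE_{ji}^-}\pi(e)=\sum_{f\in\mmE_{ji}^+}\pi(f)$; since $R$ is totally ordered, $\mmE_{ji}\neq\emptyset$ and $L_{ij}=0$ together force both $\mmE_{ji}^-$ and $\mmE_{ji}^+$ to be nonempty, with $c\in R_{>0}$. The deletion preserves the Laplacian (the removed edges contribute zero to $L_{ij}$), so the only work is to construct a valid $\mu'$.

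The key structural observation is that whenever $f\in\mu(e)\cap\mmE_{ji}^+$ for some $e\in\mmE^-$, every simple directed path from $i$ to $j$ in $\mmG$ passes through $t(e)$: such a path together with the edge $f$ (from $j$ to $i$) forms a cycle through $f$, which by condition~\textup{(iii)(b)} contains $t(e)$. Using this, I define $\mu'$ as follows. For $e\in\mmE^-$ with $s(e)\neq j$, set $\mu'(e)=\mu(e)$; this is unchanged since $\mu(e)$ consists of edges with source different from $j$ and is thus disjoint from $\mmE_{ji}^+$. The edges $e^-\in\mmE_{ji}^-$ are removed and need no assignment. For each remaining $e\in\mmE^-$ with $s(e)=j$ (so $e\notin\mmE_{ji}^-$), set $T_e=\mu(e)\cap\mmE_{ji}^+$ with total label $\gamma_e$, and let
\[
B=\bigsqcup_{e^-\in\mmE_{ji}^-}\bigl(\mu(e^-)\setminus\mmE_{ji}^+\bigr),
\]
the ``freed-up'' positive edges (all with source $j$ and target different from $i$), of total label $\beta$. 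Summing condition~\textup{(iv)} over $e^-\in\mmE_{ji}^-$ yields $\alpha+\beta\geq c$, where $\alpha=\sum_{e^-}\sum_{f\in\mu(e^-)\cap\mmE_{ji}^+}\pi(f)$, and disjointness of the $\mu(e)$'s in $\mmE_{ji}^+$ gives $\sum_e\gamma_e\leq c-\alpha\leq\beta$. I distribute $B$ greedily, splitting its edges (all with source $j$) as necessary, into pairwise disjoint subsets $B_e\subseteq B$ with $\sum_{g\in B_e}\pi(g)=\gamma_e$, and set $\mu'(e)=(\mu(e)\setminus T_e)\cup B_e$.

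The verification that $\mmG'$ is a P-graph is mostly routine: conditions~(i), (ii), \textup{(iii)(a)}, \textup{(iii)(c)} are immediate, and (iv) for the modified $e$ follows from $\pi(e)+\sum_{g\in\mu'(e)}\pi(g)=\pi(e)+\sum_{g\in\mu(e)}\pi(g)-\gamma_e+\gamma_e\geq 0$. The main obstacle is condition~\textup{(iii)(b)} for a reassigned edge $g\in B_e$: one must show every cycle through $g$ in $\mmG'$ contains $t(e)$. Since $g\in B$ has source $j$ and target different from $i$, cycles through $g$ cannot traverse any $\mmE_{ji}$-edge, so cycles through $g$ in $\mmG'$ coincide with those in $\mmG$. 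Because $g$ was originally in $\mu(e^-)$ for some $e^-\in\mmE_{ji}^-$ with $t(e^-)=i$, every such cycle passes through $i$, and its subpath from $i$ to $j$ is a simple directed path which by the key observation (applied to any $f\in T_e$, nonempty by assumption on $e$) contains $t(e)$; hence so does the cycle.
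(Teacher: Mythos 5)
Your proof is correct and takes essentially the same route as the paper's: delete $\mmE_{ji}$ (the Laplacian is unchanged since the labels sum to zero), use $L_{ij}=0$ together with condition (iv) summed over $\mmE_{ji}^-$ to show that the freed-up positive edges in $\bigcup_{e^-\in\mmE_{ji}^-}\mu(e^-)$ carry enough label to compensate the other negative edges with source $j$, redistribute them greedily with splitting (using the total order), and verify condition (iii)(b) by noting that cycles through a reassigned edge must pass through $i$ and that any path from $i$ to $j$ contains $t(e)$ because of an edge in $\mu(e)\cap\mmE_{ji}$. The only differences are cosmetic bookkeeping: the paper first merges $\mmE_{ji}^-$ into a single negative edge and drives each deficient sum to exactly zero, whereas you handle all negative edges at once and restore to each affected edge exactly the amount $\gamma_e$ it lost.
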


\begin{proof} 
Let $\mu$ be a map associated with the P-graph $\mmG$.
Without loss of generality, we can assume that there is only one edge $e^-$ from $j$ to $i$ with negative label (cf. Remark~\ref{rk:split-merge}). 
Consider the multidigraph $\widehat{\mmG}=(\mmN, \widehat{\mmE})$ obtained from $\mmG$ by removing the edges in $\mmE_{ji}$ (that is, $\widehat{\mmE}=\mmE\setminus \mmE_{ji}$) and let $\widehat{\mu}$ be the restriction of $\mu$ to $\mmE$:
\[\widehat{\mu}(e) = \mu(e) \cap  \widehat{\mmE}^+=  \mu(e) \cap  (\mmE\setminus \mmE_{ji})^+ ,\qquad e\in \widehat{\mmE}^-.\]
The pair $(\widehat{\mmG}, \widehat{\mu})$ is an edge partition. Let 
  \begin{align} 
  E & =\{e\in\widehat{\mmE}^-\st \pi(e)+\sum_{e'\in\widehat{\mu}(e)}\pi(e')\in R_{\leq 0}\}  \nonumber \\ & \subseteq 
  \{e\in\widehat{\mmE}^-\st s(e)=j \text{ and }\mu(e)\cap \mmE_{ji}\neq \emptyset\}, \label{eq:E}
  \end{align}
where the inclusion is a consequence of the fact that $\mmG$ and $\widehat{\mmG}$ only differ in edges with source $j$ and that $\mmG$ is a P-graph.  We have
\begin{multline*}
\sum_{e\in E\cup \mmE_{ji}^-}\left(\pi(e)+\sum_{e'\in\mu(e)}\pi(e') \right) = 
\sum_{e\in E}\left(\pi(e)+\sum_{e'\in\widehat{\mu}(e)}\pi(e') \right) + \sum_{\substack{e'\in\mu(e^-)\\ e'\notin \mmE_{ji} }}\pi(e')   \\
+\sum_{e\in E} \sum_{e'\in\mu(e)\cap \mmE_{ji} }\pi(e')  
+ \left(\pi(e^-)+\sum_{e'\in\mu(e^-)\cap \mmE_{ji} }\pi(e') \right).
\end{multline*}
This whole sum is in $R_{\geq 0}$ since $\mmG$ is a P-graph with associated map $\mu$, while the summand of the second row   is in $R_{\leq 0}$  because the sum is over labels of edges in $\mmE_{ji}$ and $L_{ij}=\sum_{e\in\mmE_{ji}}\pi(e)=0$. 
We deduce that for $E^\star=\{e'\in \mu(e^-)\st e'\notin \mmE_{ji}\}$, 
\begin{equation}\label{eq:positsumlemma}
\sum_{e\in E}\left(\pi(e)+\sum_{e'\in\widehat{\mu}(e)}\pi(e')\right)+  \sum_{e'\in E^\star}\pi(e') \in R_{\geq 0}.
\end{equation}
The edges in $E^\star$ belong to $\widehat{\mmG}$, but not to $\im \widehat{\mu}$. Roughly speaking, in view of \eqref{eq:positsumlemma}, we will add some of these edges to $\im \widehat{\mu}$ and modify $\widehat{\mmE}$ by splitting some edges.

Let $e\in E$.  Fix an order of the edges in the set $E^\star$, such that $E^\star=\{e'_1,\dots,e'_\ell\}$ and let for $k\in \{1,\dots,\ell\}$,
\[\alpha_k= \left(\pi(e)+\sum_{e'\in\widehat{\mu}(e)}\pi(e')\right)+  \sum_{i=1}^k\pi(e'_i).\]
Choose the first index $k$ such that $\alpha_{k}\geq 0$. 
Let $\beta_1> 0$ and $\beta_2\geq 0$ such that $\alpha_{k-1} + \beta_1=0$ and $\beta_1+\beta_2 = \pi(e'_{k}).$
If $\beta_2>0$, then redefine the multidigraph $\widehat{\mmG}$ by splitting the edge $e'_k$  into two edges $\bar{e}_1,\bar{e}_2$ with labels $\beta_1,\beta_2$  respectively. If $\beta_2=0$, let $\bar{e}_1=e_k'$.
Recall that $E^\star\cap \im \widehat{\mu}=\emptyset$.
Consider the map $\widehat{\mu}'$ given by
$$\widehat{\mu}'(\bar{e})= \begin{cases}
\widehat{\mu}(\bar{e}) & \text{if }\bar{e}\neq e \\ \widehat{\mu}(\bar{e})\cup \{e_1',\dots,e_{k-1}',\bar{e}_1\} & \text{if }\bar{e}= e.
\end{cases}
$$
Then $\widehat{\mu}'$ fulfils 
\begin{equation*}\label{eq:pie}
\pi(\bar{e})+\sum_{e'\in\widehat{\mu}'(\bar{e})}\pi(e') \quad  \begin{cases} =\pi(\bar{e})+\sum_{e'\in\widehat{\mu}(\bar{e})}\pi(e') \in  R_{\geq 0} & \text{if }\bar{e}\notin E \\
= 0 & \text{if }\bar{e}= e \\
\notin R_{\geq 0} & \text{if }\bar{e}\in E\setminus \{e\}.
\end{cases}
\end{equation*}
Further, \eqref{eq:positsumlemma} holds with $E$, $\widehat{\mu}$ and $E^\star$ replaced by $E\setminus \{e\}$, $\widehat{\mu}'$ and $E^\star\setminus \{e_1',\dots,e_{k-1}',\bar{e}_1\}$, respectively.

By iterating this construction for all edges in $E$, we obtain a multidigraph $\widehat{\mmG}$ with Laplacian $L$ and a map $\widehat{\mu}'$.  $\widehat{\mmG}$ differs from $\mmG$ in that $\mmE_{ji}=\emptyset$ and the edges in $E^\star$ might have been split. In particular both multidigraphs agree on edges with sources different from $j$. The map $\widehat{\mu}'$ fulfils Definition \ref{def:Pgraph}(\hyperref[cond4Pgraph]{iv}) by construction. 

All that remains is to show that $(\widehat{\mmG},\widehat{\mu}')$ is an edge partition. Conditions (\ref{cond3aPgraph}) and (\ref{cond3cPgraph}) are readily  satisfied. By Remark~\ref{rk:split-merge}, the pair $(\widehat{\mmG},\widehat{\mu}')$ fulfils (\ref{cond3bPgraph}) for all edges other than the edges $e'\in \mu(e^-)\cap \widehat{\mu}'(e)$ with $e\in E$. We only need to prove (\ref{cond3bPgraph}) for these edges.
Consider a path from $t(e')$ to $j$  in $\widehat{\mmG}$. We need to show that it contains $t(e)$. 
Since there is no edge with source $j$, the path is also in $\mmG$. Since $\mmG$ is a P-graph with associated map $\mu$ and $t(e^-)=i$, the path contains $i$. By considering any edge in  $\mu(e)\cap \mmE_{ji}\neq \emptyset$ 
(cf.\,\eqref{eq:E}), the 
 subpath from $i$ to $j$ contains $t(e)$. So condition  (\ref{cond3bPgraph}) is satisfied. 
This concludes the proof.
\end{proof}

The lemma has the consequence that to construct a P-graph corresponding to a Laplacian $L$,  we do not need  to consider edges between nodes with zero entry in $L$.

\section{An extension of the previous statements}\label{sec:secondsystem} 
In this section we consider a generalization of the system studied in Section \ref{sec:possolution}.
The system of interest is a linear square system $Ax+b=0$ in $x=(x_1,\dots, x_m)$ such that the coefficient matrix $A$ and the vector of independent terms $b$ are of the form
\begin{equation}\label{eq:secondsystem}
A=\left(
\begin{array}{ccccc}
A_1 & 0 &\cdots& 0 & 0 \\
0  & A_2  & \cdots & 0 & 0  \\
\vdots & \vdots & \ddots & \vdots & \vdots \\
0  &0& \cdots & A_d & 0 \\ \hline
\multicolumn{5}{c}{A_0}  \end{array}
\right)\in R^{m\times m}, \qquad 
b= \left(\begin{array}{c}
b^1    \\
b^2\\ 
\vdots\\
 b^d  \\ \hline
b^0  \end{array}\right)\in R^m,
\end{equation}
with $A_0\in R^{m_0\times m}$ and $b^0\in R^{m_0}$ arbitrary, and for  $i=1,\dots,d$,
\begin{enumerate}[(i)]
\item $A_i$ is a square matrix of size $m_i$.
\item  $b^i$ is a vector of size $m_i$ and nonzero in at most one entry.  
\end{enumerate}
We let $\mmN=\{1,\ldots,m+1\}$ as before and let $\mmN_i$ denote the set of indices of the rows corresponding to $A_i$. Specifically,
\begin{align*}
 \mmN_i &= \left\{1+ \sum_{j=1}^{i-1} m_j\,,\, \dots\, , \, \sum_{j=1}^{i} m_j\right\}, \quad i=1,\dots,d 
 \end{align*}
 Let also
 \begin{align*}
  \mmN_0 &= \mmN\setminus\bigcup\limits_{i=1}^d \mmN_i = \{m-m_0+1,\dots,m+1\}. 
  \end{align*}
Note that we have $m-m_0=m_1+\dots+m_d$.
By (ii), we can choose indices $j_1,\dots,j_d$, with $j_i\in \mmN_i$ for all $i=1,\dots,d$, such that $b_j=0$ 
 if $j\neq j_i$ and $j\leq m-m_0$. If  $b^i$ is the zero vector, then the index $j_i$ is arbitrary (but fixed). Otherwise it is uniquely determined.

 If $d=0$, then we are left with a system of the form studied in Section \ref{sec:possolution}. Hence \eqref{eq:secondsystem} might be considered an extension of the previous case.  The linear system has $d+1$ ``blocks'' or subsystems. For the  variables with indices in $\mmN_i$, $i=1,\ldots,d$, there is one subsystem with $|\mmN_i|$ linear equations. In addition, there is the subsystem  $A_0x+b^0=0$ that might depend on all $m$ variables.

\begin{remark}
If $\det(A)\not=0$, then we might in principle apply the results of the previous section. However, if the rows $j_1,\dots,j_d$ of $A$  are nonnegative and $b^1,\dots,b^{d}$ nonpositive,  then the conditions of Definition \ref{def:Pgraph} will often fail for the examples we have in mind, see  Example \ref{ex:secondtype}. In reaction network theory, which is our main source of examples,  this type of system  arises naturally (after reordering of the equations) and is perhaps the rule rather than the exception.  The equations with index different from $j_i$ correspond to equilibrium equations, and those with index equal to one of $j_i$ correspond to conservation relations with $b_{j_i}<0$.  
\end{remark}

\refstepcounter{theorem}\label{ref:2type}
\newcounter{mycounterType}
\renewcommand{\themycounterType}{\getrefnumber{ref:2type}\,(part\,\Alph{mycounterType})}
\newtheorem{myexample2Type}[mycounterType]{Example}

\begin{myexample2Type}\label{ex:secondtype}
Let $z_1,\dots, z_5$ be the coordinate functions in $\R^5_{\geq 0}$ and consider the linear system in $x_1,x_2,x_3$,
\begin{align*}
-z_2x_1+z_3x_2&=0,\\
x_1+x_2-z_1&=0,\\
z_3x_2-z_4x_3+z_5&=0.
\end{align*}
Note that $\det(A)=(z_2+z_3)z_4\neq 0$.  To apply Theorem \ref{thm:posol}, we change the sign of the second equation, such that the Laplacian matrix $L$ associated with the system has nonpositive diagonal entries, which is a necessary condition for the existence of a P-graph (see Remark \ref{rmk:condP}). 
 The matrix $L$ and a multidigraph with Laplacian $L$ are

\begin{minipage}{0.52\linewidth}
\[
L=\left(\begin{array}{ccc|c}
-z_2 & z_3 & 0 &0 \\ 
-1 & -1 & 0 &  z_1\\ 
0 & \phantom{-}z_3 & -z_4 &z_5\\
\hline
1+z_2 & 1-2z_3 & z_4 & -z_1-z_{5}
\end{array} \right)
\]
\end{minipage}
\begin{minipage}{0.4\linewidth}
\begin{center}
\begin{tikzpicture}[inner sep=1.2pt]
\node (v3) [shape=circle] at (4,0) {$3$};
\node (v1) [shape=circle] at (0,0) {$1$};
\node (v2) [shape=circle] at (2,0) {$2$};
\node (*) [shape=circle] at (2,-1.5) {$4$};

\draw[->] (v1) to[out=10,in=170] node[above,sloped]{\footnotesize $-1$}(v2);
\draw[->] (v2) to[out=190,in=-10] node[below,sloped]{\footnotesize $z_3$}(v1);
\draw[->] (v1) to[out=290,in=160] node[left]{\footnotesize $1+z_2$}(*);
\draw[->] (*) to[out=80,in=280] node[right]{\footnotesize $z_1$}(v2);
\draw[->] (v2) to[out=260,in=100] node[left]{\footnotesize $1$}(*);
\draw[->] (v2) to[out=230,in=130] node[left]{\footnotesize $-2z_3$}(*);
\draw[->] (v2) to[out=0,in=180] node[above]{\footnotesize $z_3$}(v3);
\draw[->] (v3) to[out=260,in=30] node[above]{\footnotesize $z_4$}(*);
\draw[->] (*) to[out=10,in=280] node[below]{\footnotesize $z_5$}(v3);
\end{tikzpicture}
\end{center}
\end{minipage}

\medskip
 Since $\mu(2\ce{->[-2z_3]}4)$ is necessarily a subset  of $\{2\ce{->[z_3]}3, 2\ce{->[1]}4\}$, Defintion  \ref{def:Pgraph}(\hyperref[cond4Pgraph]{iv}) cannot be satisfied. Hence this multidigraph is not a P-graph.  Any multidigraph with Laplacian $L$ will have the same problem, so Theorem \ref{thm:posol} cannot be applied.
If we substitute $z_3$ by a nonnegative real number $\le1$, then the multidigraph is a P-graph.

The linear system falls in the setting of the present section with 
$$ A_1= \left(\begin{array}{cc}
-z_2 & z_3  \\ 
1 & 1  \end{array} \right), \quad A_0 = \left(\begin{array}{ccc}
0 & \phantom{-}z_3 & -z_4 
\end{array} \right), \quad b^1 = \left(\begin{array}{c}
0 \\ 
  -z_1
\end{array} \right),\quad b^0 =  \left(\begin{array}{c}
  z_5\end{array} \right).$$
\end{myexample2Type}

 \begin{definition}\label{def:Acompatible}
Let  $\mmG$  be a labeled multidigraph with $m+1$ nodes and Laplacian $L$. Then $\mmG$  is said to be \textbf{$\boldsymbol{A}$-compatible} if
 \begin{enumerate}[(i)]
\item  There is not an edge from a node in $\mmN_i$, $i\ge 0$,  to a node in $\mmN_j$ for $i\neq j$, $j\geq 1$.
\item The $\ell$-th row of $L$ agrees with the $\ell$-th row of $A|b$ for $\ell\not\in\{j_1,\ldots,j_d,m+1\}$. 
\end{enumerate}
Furthermore, the Laplacian $L$ is said to be $A$-compatible, if $\mathcal{G}$ is $A$-compatible.
\end{definition}

The graphical structure of an $A$-compatible multidigraph  is shown in Figure \ref{fig:struct}. By Definition~\ref{def:Acompatible}(i), any $A$-compatible Laplacian has the same block form as $A|b$ in \eqref{eq:secondsystem}. In particular, the support of the $j_i$-th row of $L$ is included in $\mmN_i\cup \{m+1\}$.

\begin{figure}[h!]
\begin{center}
\begin{tikzpicture}[inner sep=1.2pt]
\node (Xj) [shape=circle,draw,minimum size=1.2cm] at (2,-0.5) {$\mmN_j$};
\node (...1) at (1.4,0.9) {$\ddots$};
\node (X1) [shape=circle,draw,minimum size=1.2cm] at (1,2) {$\mmN_{1}$};
\node (..2) at (4,-0.7) {$\dots$};
\node (Xd) [shape=circle,draw,minimum size=1.2cm] at (7,2) {$\mmN_d$};
\node (...3) at (6.5,0.9) {\reflectbox{$\ddots$}};
\node (Xk) [shape=circle,draw,minimum size=1.2cm] at (6,-0.5) {$\mmN_k$};
\node (X0) [shape=circle,draw,minimum size=1.2cm] at (4,2) {$\mmN'_0$};
\node (*) at (4,0) {$m+1$};
\draw[->] (X1) to[out=10,in=170] node[above,sloped]{}(X0);
\draw[->] (X1) to[out=-10,in=180] node[below,sloped]{}(*);
\draw[->] (Xd) to[out=180,in=-10] node[above,sloped]{}(X0);
\draw[->] (Xk) to[out=135,in=0] node[right]{}(*);
\draw[->] (X0) to[out=250,in=110] node[right]{}(*);
\draw[->] (*) to[out=60,in=300] node[right]{}(X0);
\end{tikzpicture}
\end{center}
\caption{The structure of an $A$-compatible multidigraph $\mmG$ with the node $m+1$ singled out, $\mmN_0'=\mmN_0\setminus\{m+1\}$.}\label{fig:struct}
\end{figure}
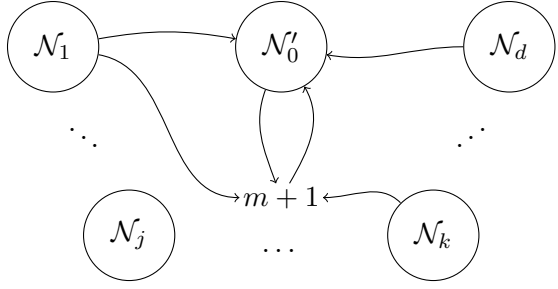

\begin{remark}
If $d=0$, then any multidigraph with Laplacian as in \eqref{eq:Laplacian} is $A$-compatible. Such a  multidigraph has only  the component with node set $\mmN_0$.
\end{remark}

Let $F= \{j_1,\dots,j_d,m+1\}$ and $\mmN_{d+1}=\{m+1\},$
and define 
\begin{equation}\label{eq:B}
\B=\underset{j=1}{\overset{d+1}{\mathlarger{\odot}}} \mmN_j,\qquad \B^k=\underset{j=1,j\neq k}{\overset{d+1}{\mathlarger{\odot}}} \mmN_j,\quad k=1,\ldots,d+1. 
\end{equation}
If $B$ is an element in any of the defined sets, then the set $B\cap \mmN_i$ ($i\neq k$ in the second case) consists of a single element, which we  for simplicity denote by $\beta_i$, that is,
\begin{equation}\label{eq:beta}
B\cap \mmN_i=\{\beta_i\}.
\end{equation}

 It is easy  to show the following lemma using  Definition~\ref{def:Acompatible}(i)  (see Figure~\ref{fig:struct}).

\begin{lemma}\label{lemma:rootsforests}
Let $\mmG$ be an $A$-compatible multidigraph.
\begin{enumerate}[(i)]
\item Let $\zeta$  a spanning forest of $\mmG$ and $\tau$ a connected component of $\zeta$. If $\tau$ contains a node in $\mmN_i$, $i\ge 0$, then the  root of $\tau$ is in  $\mmN_i\cup \mmN_0$. 
\item  $\Theta_{\mmG}(F,B)=\emptyset$  if $B$ contains two elements in $\mmN_i$ for some $i>0$.
\end{enumerate}
\end{lemma}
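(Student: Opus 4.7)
The plan is to exploit the edge-directionality constraint in Definition~\ref{def:Acompatible}(i), which permits edges with source in $\mmN_i$, $i\geq 1$, to terminate only in $\mmN_i\cup\mmN_0$, and edges with source in $\mmN_0$ to terminate only in $\mmN_0$. For part~(i), I would trace the unique directed path inside $\tau$ from the given node $v\in\mmN_i$ to the root $r$ of $\tau$. By the constraint above, each edge on this path either stays inside $\mmN_i$ or jumps into $\mmN_0$, and once the path enters $\mmN_0$ every subsequent edge must remain there. An easy induction on the length of the path then yields $r\in\mmN_i\cup\mmN_0$.

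For part~(ii) I would argue by contradiction. Suppose $\zeta\in\Theta_\mmG(F,B)$ and $b_1,b_2\in B\cap\mmN_i$ for some $i\geq 1$; let $\tau_1,\tau_2$ be the two distinct component trees of $\zeta$ rooted at $b_1,b_2$. The key reduction, obtained by reading part~(i) contrapositively, is that \emph{every} node of $\tau_1$ must lie in $\mmN_i$: for any $v\in\tau_1$, writing $v\in\mmN_k$ with $k\in\{0,1,\ldots,d\}$, part~(i) forces $b_1\in\mmN_k\cup\mmN_0$; since $b_1\in\mmN_i$ with $i\geq 1$ excludes $b_1\in\mmN_0$, and since the blocks $\mmN_0,\mmN_1,\ldots,\mmN_d$ are pairwise disjoint, we must have $k=i$. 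The same reasoning applies to $\tau_2$.

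Finally I would use the $F$-bookkeeping built into $\Theta_\mmG(F,B)$: the bijection $g_\zeta\colon F\to B$ implies that each component tree contains exactly one element of $F=\{j_1,\ldots,j_d,m+1\}$. Within $\mmN_i$ the only admissible $F$-node is $j_i$, because $m+1\in\mmN_0$ and $j_k\in\mmN_k$ is disjoint from $\mmN_i$ for $k\neq i$. Therefore both $\tau_1$ and $\tau_2$ would have to contain the single node $j_i$, contradicting their disjointness as connected components of $\zeta$, and (ii) follows.

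I do not anticipate any real obstacle here; both statements reduce to a direct unpacking of the block structure imposed by $A$-compatibility together with the component/root/$F$-node bookkeeping in the definition of $\Theta_\mmG(F,B)$. The only minor care needed is in part~(ii) when identifying the contrapositive use of~(i), to ensure we use the disjointness of the node blocks to collapse $k\in\{0,\ldots,d\}$ down to $k=i$.
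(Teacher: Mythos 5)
Your proof is correct and follows exactly the route the paper intends: the paper omits the proof, stating only that the lemma follows easily from Definition~\ref{def:Acompatible}(i) and the block structure in Figure~\ref{fig:struct}, and your argument is a sound elaboration of that — tracing the path to the root for (i), and for (ii) combining the contrapositive of (i) with the fact that each of the $|B|=|F|$ trees contains exactly one node of $F$, forcing both trees rooted in $\mmN_i$ to contain $j_i$.
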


In particular, if $\tau$ contains $m+1$, then the root of $\tau$ is in $\mmN_0$.

We are now ready to state a parallel version of Proposition \ref{prop:solCMT} under the assumptions of  the current setting. 
The proof is given in Section \ref{sec:computation}.

\begin{proposition}\label{prop:solCMT2}
Consider a linear system  $Ax+b=0$ as in  \eqref{eq:secondsystem} such that $\det(A)\neq 0$,  and assume there exists an $A$-compatible multidigraph $\mmG$.  Then, the solution to the linear system 
is 
\begin{align}\label{eq:solution2}
x_\ell &=\frac{\sum\limits_{k=1}^{d+1} (-b_{j_k}) \sum\limits_{B\in \B^{k},\ell\notin B}\left(\prod\limits_{ i=1, i\neq k}^d a_{j_i\beta_i}\right) \Upsilon_{\mmG}(F,B\cup \{\ell\})   }{\sum\limits_{B\in \B}\left(\prod\limits_{i=1}^d a_{j_i\beta_i}\right)\Upsilon_{\mmG}(F,B)},
\end{align}
where $b_{j_{d+1}}=-1$ for convenience and $\ell=1,\ldots,m$.
\end{proposition}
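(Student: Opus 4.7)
The plan is to evaluate $x_\ell = \det(A_{\ell \to -b})/\det(A)$ by Cramer's rule, Laplace-expanding both determinants along the rows $F_0 = \{j_1, \ldots, j_d\}$, identifying the resulting complementary minors with minors of the $A$-compatible Laplacian $L$ through Definition~\ref{def:Acompatible}(ii), and applying the All Minors Matrix-Tree Theorem. Expanding along $F_0$ is the natural choice, because the surviving rows then belong to $F_0^c$, on which Definition~\ref{def:Acompatible}(ii) forces row-by-row agreement of $L$ with $A|b$.

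For the denominator, row $j_i$ of $A$ has support in $\mmN_i$ by \eqref{eq:secondsystem}, so the only nonzero $d \times d$ block in the Laplace expansion corresponds to $C = \{\beta_1, \ldots, \beta_d\}$ with $\beta_i \in \mmN_i$; this block is diagonal and factors as $\prod_{i=1}^d a_{j_i, \beta_i}$, while the complementary minor equals $L_{(F, B)}$ for $B = C \cup \{m+1\} \in \B$ and $F = F_0 \cup \{m+1\}$. Lemma~\ref{lemma:rootsforests} forces the induced bijection $g_\zeta$ to be $j_i \mapsto \beta_i$, $m+1 \mapsto m+1$, which is order preserving, so $\widetilde\Upsilon_\mmG(F, B) = \Upsilon_\mmG(F, B)$; the Laplace sign $(-1)^{\sum j_i + \sum \beta_i}$ combines with the Matrix-Tree sign into a uniform $(-1)^{m-d}$, yielding $\det(A) = (-1)^{m-d} \sum_{B \in \B} \bigl(\prod_i a_{j_i, \beta_i}\bigr)\, \Upsilon_\mmG(F, B)$.

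For the numerator, row $j_i$ of $A_{\ell \to -b}$ has support in $\mmN_i$ (if $\ell \in \mmN_i$) or $\mmN_i \cup \{\ell\}$, so the Laplace expansion also admits column sets of the form $C_{i^\star} = \{\beta_i : i \neq i^\star\} \cup \{\ell\}$ for $i^\star \in \{1, \ldots, d\}$. I would treat two cases. If $\ell \in \mmN_k$ for some $k \in \{1, \ldots, d\}$, the $C = \{\beta_1, \ldots, \beta_d\}$ contribution vanishes because the $m_k-1$ rows of the complementary minor indexed by $\mmN_k \setminus \{j_k\}$ have support confined to the $m_k-2$ columns $\mmN_k \setminus \{\beta_k, \ell\}$ (the entries at $\ell$ are zero, since $b_r = 0$ for $r \in \mmN_k \setminus \{j_k\}$, and $\beta_k$ is removed), forcing linear dependence; for $i^\star \neq k$ the complementary minor $L_{(F, C_{i^\star} \cup \{m+1\})}$ vanishes by Lemma~\ref{lemma:rootsforests}(ii), since $C_{i^\star} \cup \{m+1\}$ contains both $\beta_k$ and $\ell$ in $\mmN_k$; only $i^\star = k$ survives and reproduces the unique nonzero $k$-term of the numerator. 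If $\ell \in \mmN_0 \setminus \{m+1\}$, the $C = \{\beta_1, \ldots, \beta_d\}$ case no longer vanishes: its complementary minor has column $\ell$ equal to $-b[F_0^c] = -L[F_0^c, \{m+1\}]$, and swapping the columns $\ell$ and $m+1$ in $L$ identifies it with $L_{(F, \{\beta_1, \ldots, \beta_d, \ell\})}$, producing the $k = d+1$ term of the numerator (via the convention $b_{j_{d+1}} = -1$); each $C_{i^\star}$ with $i^\star \in \{1, \ldots, d\}$ produces the $k = i^\star$ term directly.

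The key combinatorial observation driving the inversion counts is that whenever $m+1$ belongs to both $F$ and the root set $B \cup \{\ell\}$, $g_\zeta(m+1) = m+1$ is forced: otherwise $m+1$ would belong simultaneously to the tree rooted at $m+1$ (as its root) and to a different tree of the forest, which is impossible. Lemma~\ref{lemma:rootsforests}(i) then pins down the remaining assignments ($j_i \mapsto \beta_i$ for $i \neq k$, and $j_k \mapsto \ell$), so $I(g_\zeta)$ becomes a short explicit computation that converts $\widetilde\Upsilon_\mmG$ into $\Upsilon_\mmG$ up to a tractable sign. The main obstacle is the sign bookkeeping: the Laplace sign, the internal sign from expanding $\det(A_{\ell \to -b}[F_0, C])$ along the row or column with a unique nonzero entry (which depends on where $\ell$ sits in sorted $C$ and where $j_k$ sits in $F_0$), the Matrix-Tree sign $(-1)^\epsilon$, and, in the case $\ell \in \mmN_0$, the column-swap sign, must collectively collapse to the same $(-1)^{m-d}$ appearing in $\det(A)$, so that the quotient in Cramer's rule produces \eqref{eq:solution2}.
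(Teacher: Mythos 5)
Your strategy for the denominator (Cramer's rule, Laplace expansion along the rows $j_1,\dots,j_d$, identification of the complementary minors with minors of $L$ via Definition~\ref{def:Acompatible}(ii), then Theorem~\ref{thm:matrixtree} with $I(g_\zeta)=0$) is exactly the paper's, but your treatment of $\det(A_{\ell\to-b})$ takes a genuinely different route. The paper expands first along the replaced column $\ell$, which splits the numerator into the contributions of $b_{j_1},\dots,b_{j_d}$ and those of $b^0$, and the latter forces it to prove the forest identity
\[
\sum_{k=m-m_0+1}^{m} b_k\,\Upsilon_\mmG\big(F\cup\{k\},B\cup\{\ell\}\big)=\Upsilon_\mmG\big(F,(B\cup\{\ell\})\setminus\{m+1\}\big)
\]
by a combinatorial bijection that attaches an edge $m+1\to k$ to each forest. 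You instead perform a single generalized Laplace expansion of $A_{\ell\to-b}$ along the rows $j_1,\dots,j_d$ and keep the $-b$ column inside the complementary minor, where, on the surviving rows (which agree with the corresponding rows of $L$), it is literally $-1$ times column $m+1$ of $L$; this converts the paper's forest-merging argument into a column relabeling, which is arguably cleaner. Your case analysis is sound: the enumeration of admissible column sets $C$ and $C_{i^\star}$, the vanishing of the $C_{i^\star}$-terms for $i^\star\neq k$ via Lemma~\ref{lemma:rootsforests}(ii), and the linear-dependence argument killing the $C$-term when $\ell\in\mmN_k$ (which you could also obtain directly from Lemma~\ref{lemma:rootsforests}(ii) after the same identification with $L$, making the ad hoc rank argument unnecessary) all match what \eqref{eq:solution2} requires, and your determination of $g_\zeta$ agrees with \eqref{eq:rootsforests}. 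The one genuine omission is that you list every sign source correctly but only assert, rather than verify, that they collapse to a common $(-1)^{m-d}$; that bookkeeping is where the paper's proof spends most of its effort, and a complete write-up must carry it out.
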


By assuming $d=0$ we retrieve Proposition \ref{prop:solCMT}. 
Several terms in the numerator of \eqref{eq:solution2} are readily seen to be zero. Indeed, by Lemma~\ref{lemma:rootsforests}(ii), $\Theta_{\mmG}(F,B\cup \{\ell\})=\emptyset$ if $\ell \in \mmN_i$ for $i\in \{1,\dots,d\}$ and $B\in \B^k$ with  $k\in \{1,\dots,d+1\}$, $k\neq i$.

If the columns of $A_0$ with indices in $\mmN_k$, $k\in \{1,\dots,d\}$, are zero, then
the variables $x_i$ with $i\in \mmN_k$ only appear in the subsystem given by the rows of $A$ with indices in $\mmN_k$ and thus this subsystem can be solved independently. 

Building on the ideas of Section~\ref{sec:possolution}, Proposition~\ref{prop:solCMT2} allows us to study when the solution to the system $Ax+b=0$ is positive. In particular, we give the following characterization. The proof is given in Section \ref{sec:positivity}.

\begin{theorem}\label{thm:possol2}
Consider a linear system as in  \eqref{eq:secondsystem} such that $\det(A)\neq 0$, the rows $j_1,\dots,j_d$ of $A$ are nonnegative and $b_{j_1},\dots,b_{j_d}$ are nonpositive. Further, assume there exists an $A$-compatible P-graph $\mmG$ such that
\begin{itemize}
\item[(*)]\label{condpossol2}  for $\ell\in\{1,\dots,m\}$ and $i\in\{1,\dots, d\}$, any path from $j_i\in\mmN_i$ to $\ell$ that contains an edge in $\mmE^-$  goes through $m+1$.
\end{itemize}
Then, each component of the solution in \eqref{eq:solution2} is the quotient of two terms in  $R_{\geq 0}$.
\end{theorem}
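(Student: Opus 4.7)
The plan is to show that both the numerator and denominator of \eqref{eq:solution2} lie in $R_{\geq 0}$. Each coefficient $-b_{j_k}$ is in $R_{\geq 0}$ (by the hypothesis on $b$ for $k\leq d$, and equal to $1$ for $k=d+1$), and each coefficient $a_{j_i\beta_i}$ is in $R_{\geq 0}$ since row $j_i$ of $A$ is nonnegative. Thus it suffices to prove that $\Upsilon_\mmG(F,B')\in R_{\geq 0}$ for every set $B'$ appearing in the formula, namely $B'=B$ with $B\in\B$ or $B'=B\cup\{\ell\}$ with $B\in\B^k$ and $\ell\notin B$; in both cases $|B'|=d+1=|F|$ for $F=\{j_1,\dots,j_d,m+1\}$.

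I would achieve this by lifting the decomposition machinery of Section~\ref{sec:possolution} from $\Theta_\mmG(B)$ to $\Theta_\mmG(F,B')$. Concretely, for each $\zeta\in\Theta_\mmG(F,B')$ one defines $\mmE_\zeta=\{E\subseteq\zeta\cap\im\mu\st (\zeta\setminus E)\cup\mu^*(E)\in\Theta_\mmG(F,B')\}$ together with the corresponding surjection $\psi$ onto the maximal subset $\Lambda_\mmG(F,B')$ (on which no further negative-to-positive edge replacement is possible). Granted analogues of Lemmas~\ref{lemma:maxEzeta}, \ref{lemma:decompPrePsiEdges} and \ref{lemma:decompPrePsi} in this refined setting, the same chain of reasoning that produces \eqref{eq:positive} yields
\[
\Upsilon_\mmG(F,B')=\sum_{\zeta\in\Lambda_\mmG(F,B')}\pi(\zeta\cap\mmE^+)\prod_{e\in\zeta\cap\mmE^-}\Big(\pi(e)+\sum_{e'\in\mu(e)}\pi(e')\Big),
\]
which lies in $R_{\geq 0}$ by Definition~\ref{def:Pgraph}(\hyperref[cond4Pgraph]{iv}).

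The crux, and the main obstacle, is showing that an edge swap $e\mapsto e'\in\mu(e)$ with $e\in\zeta\cap\mmE^-$ preserves membership in $\Theta_\mmG(F,B')$ and not merely in $\Theta_\mmG(B')$. The swap leaves the root set unchanged (since $s(e)=s(e')$), so it suffices to verify that no node of $F$ is moved between trees. If $T$ denotes the tree of $\zeta$ containing $e$, rooted at $r\in B'$, and $T_v$ the subtree of $T$ cut off below $v=s(e)$ when $e$ is removed, then the swap either reattaches $T_v$ within $T$ (when $t(e')\in T\setminus T_v$) or transfers $T_v$ into a different tree. In the second case, I would show by a case analysis on the unique $F$-node $f$ of $T$ that $T_v\cap F=\emptyset$. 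If $f=j_i$, then $m+1\notin T$ by uniqueness of $F$-nodes in each component, so condition~(*) applied to the $j_i$-to-$r$ path in $\zeta$ (a path in $\mmG$ which therefore cannot traverse $m+1$) forbids any $\mmE^-$ edge on that path; hence $e$ is not on it and $j_i\notin T_v$. If $f=m+1$, then no $j_i$ lies in $T$, while $A$-compatibility and Lemma~\ref{lemma:rootsforests}(i) confine $T_v$ to $\mmN_i$ whenever $v\in\mmN_i$ with $i\geq 1$ (so $m+1\in\mmN_0$ is outside $T_v$); and when $v\in\mmN_0$, $A$-compatibility forces $t(e')\in\mmN_0$, after which the block structure implies either that $m+1$ is itself the root of $T$ (hence outside $T_v$), or that $T$ absorbs all of $\mmN_0$ (the remaining configuration $B\in\B^{d+1}$ with $\ell\in\mmN_0$, $\ell\neq m+1$), whence $t(e')\in T\setminus T_v$ by the no-cycle argument of Lemma~\ref{lemma:decompPrePsiEdges}, so the swap is internal to $T$. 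The closure under union in the analogue of Lemma~\ref{lemma:maxEzeta} then transfers verbatim, as it uses only that cycles contain at most one $\mmE^-$ edge, and the remaining arguments of Section~\ref{sec:possolution} carry over directly to produce the displayed formula and the conclusion.
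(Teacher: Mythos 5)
Your overall route is the paper's: reduce to showing $\Upsilon_\mmG(F,B')\in R_{\geq 0}$ via the sign hypotheses on the rows $j_1,\dots,j_d$ and on $b$, lift the $\Lambda/\psi$ decomposition of Section~\ref{sec:possolution} from $\Theta_\mmG(B')$ to $\Theta_\mmG(F,B')$, and read off nonnegativity from Definition~\ref{def:Pgraph}(\hyperref[cond4Pgraph]{iv}) together with Proposition~\ref{prop:solCMT2}. Your case analysis for the single swap $e\mapsto e'\in\mu(e)$ with $e\in\zeta\cap\mmE^-$ is correct and amounts to the paper's Lemma~\ref{lem:paths} combined with Lemma~\ref{lemma:PrepsiF}: the subcase $f=j_i$ is exactly the observation that a path from $j_i$ to its root cannot contain a negative edge (condition (*) would force it through $m+1$, which sits in a different tree), and your treatment of the tree containing $m+1$ is a more hands-on version of Lemma~\ref{lem:paths}(ii), which instead notes that once the components of the $j_k$'s are correctly rooted, the root of the component of $m+1$ is forced by Lemma~\ref{lemma:rootsforests}(i) and the structure of $B'$.

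The genuine gap is the final claim that closure under union for $\mmE^F_\zeta=\{E\subseteq\zeta\cap\im\mu\st(\zeta\setminus E)\cup\mu^{*}(E)\in\Theta_\mmG(F,B')\}$ ``transfers verbatim, as it uses only that cycles contain at most one $\mmE^-$ edge.'' The verbatim transfer of Lemma~\ref{lemma:maxEzeta} only yields $(\zeta\setminus(E_1\cup E_2))\cup\mu^{*}(E_1\cup E_2)\in\Theta_\mmG(B')$; it says nothing about which tree contains which node of $F$, and that is precisely what distinguishes $\Theta_\mmG(F,B')$ from $\Theta_\mmG(B')$. Moreover this operation runs in the opposite direction from the swap you analyzed (positive edges replaced by negative preimages), and $E_1\cup E_2$ cannot be processed edge by edge, since only the full sets $E_1,E_2$ are known to be swappable. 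Without closure under union the maximal element $E^F_\zeta$, hence $\psi_F$ and the partition of $\Theta_\mmG(F,B')$, are not well defined, and the displayed product formula does not follow. The missing argument (the paper's Lemma~\ref{lemma:maxEF}) is: for $j_k\in F$ with root $i$, the path from $j_k$ to $i$ in $\zeta$ contains no edge of $E_1\cup E_2$ --- otherwise, following outgoing edges from $j_k$ in $\zeta_1=(\zeta\setminus E_1)\cup\mu^{*}(E_1)$ one would traverse a negative edge $\mu^{*}(e)$ on the way to $i$, contradicting the no-negative-edge fact you proved in the $f=j_i$ case --- so the path survives the replacement and $j_k$ stays attached to $i$, while the $m+1$-component is again forced. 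You have all the ingredients, but this step is not the triviality you declare it to be.
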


In particular, if the target of all edges with negative labels is $m+1$, then condition (\hyperref[condpossol2]{*}) is fulfilled, see Example \ref{ex:CRNT} for an illustration. If $d=0$, then any $A$-compatible multidigraph $\mmG$ satisfies (\hyperref[condpossol2]{*}), so Theorem \ref{thm:possol2} is a generalization of Theorem \ref{thm:posol}.

\begin{corollary}\label{cor:extrazeros}
With the hypotheses of Theorem \ref{thm:possol2}, assume there is a path from a node $\ell\in\mmN_i$ with $i>0$, to a node   $j\in\{1,\dots,m\}$ that contains an edge in $\mmE^-$ and that does not go through $m+1$. Then, the solution to the linear system $Ax+b=0$ fulfils  $x_\ell=0$.
\end{corollary}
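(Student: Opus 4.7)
The plan is to apply Proposition~\ref{prop:solCMT2} and reduce the numerator of $x_\ell$ to a sum that must vanish under the hypotheses. First observe that if $\ell\in\mmN_i$ with $i>0$ and $k\neq i$, then every $B\in\B^k$ contains the unique element $\beta_i\in\mmN_i$, so $B\cup\{\ell\}$ has two elements in $\mmN_i$. Lemma~\ref{lemma:rootsforests}(ii) then gives $\Upsilon_\mmG(F,B\cup\{\ell\})=0$, so the outer sum in the numerator of \eqref{eq:solution2} collapses to the single $k=i$ summand. It therefore suffices to prove that $\Upsilon_\mmG(F,B\cup\{\ell\})=0$ for every $B\in\B^i$.

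Fix such a $B$ and consider an arbitrary $\zeta\in\Theta_\mmG(F,B\cup\{\ell\})$. By Lemma~\ref{lemma:rootsforests}(i) together with Definition~\ref{def:Acompatible}(i), the tree $T_\ell$ of $\zeta$ rooted at $\ell$ is contained entirely in $\mmN_i$: edges out of $\mmN_i$ only reach $\mmN_i\cup\mmN_0$, and no edges return from $\mmN_0$ to $\mmN_i$, so every ancestor of $\ell$ in $\zeta$ lies in $\mmN_i$. Since $F\cap\mmN_i=\{j_i\}$, the tree $T_\ell$ contains $j_i$ and supplies a directed path from $j_i$ to $\ell$ inside $\mmN_i$ that avoids $m+1$; condition~(*) then forces this path to use only positive edges.

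The contradiction is now extracted from the bad path. Write it as $\ell=v_0\to v_1\to\cdots\to v_k=j$ with $v_r\to v_{r+1}\in\mmE^-$ and $m+1\notin\{v_0,\ldots,v_k\}$. Concatenating the positive tree-path $j_i\to\cdots\to\ell$ inside $T_\ell$ with this bad path produces a walk $W$ from $j_i$ to $j$ that uses the negative edge $v_r\to v_{r+1}$ and avoids $m+1$. Extracting from $W$ a simple directed path that retains the negative edge yields a directed path from $j_i$ to a node in $\{1,\ldots,m\}$ that contains an edge of $\mmE^-$ and does not pass through $m+1$, contradicting~(*). Hence $\Theta_\mmG(F,B\cup\{\ell\})=\emptyset$, so $\Upsilon_\mmG(F,B\cup\{\ell\})=0$ for every $B\in\B^i$, and $x_\ell=0$.

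The hardest part is the extraction of a simple path from $W$ in a way that preserves the edge $v_r\to v_{r+1}$: a naive cycle-removal may destroy this edge when $v_{r+1}$ already lies on the positive tree-path from $j_i$ to $\ell$ or earlier on the bad path. One handles this by removing cycles on the two sides of $v_r\to v_{r+1}$ independently, with a case analysis driven by whether $v_{r+1}\in\mmN_i$ or $v_{r+1}\in\mmN_0\setminus\{m+1\}$ (the only options, since $W$ avoids $m+1$ and $A$-compatibility constrains the targets of the edges traversed).
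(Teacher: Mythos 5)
Your proof is correct and is essentially the paper's own argument: you discard the $k\neq i$ terms of the numerator of \eqref{eq:solution2} via Lemma~\ref{lemma:rootsforests}(ii), and for $B\in\B^i$ you show $\Theta_\mmG(F,B\cup\{\ell\})=\emptyset$ by observing that any such $\zeta$ contains a path from $j_i$ to $\ell$ lying entirely in $\mmN_i$ (since $g_\zeta(j_i)=\ell$ by \eqref{eq:rootsforests}), which, extended by the hypothesized bad path, yields a path from $j_i$ containing an edge of $\mmE^-$ and avoiding $m+1$, contradicting (\hyperref[condpossol2]{*}). Your closing paragraph on extracting a \emph{simple} path is the only departure from the paper, which simply takes the concatenation itself as the path violating (\hyperref[condpossol2]{*}) (its ``paths'' are nowhere required to have distinct nodes); this is just as well, because the cycle-removal you sketch can destroy the negative edge precisely when its target already lies on the tree path from $j_i$ to $\ell$, in which case a simple path from $j_i$ through a negative edge avoiding $m+1$ need not exist at all, so the claim should be read with the paper's (walk) convention rather than patched by that case analysis.
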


\begin{remark}
Remark \ref{rem:order} could essentially be restated here. There is some flexibility to choose the precise order of the rows of $A$ within each block. All the rows of $A_1,\dots,A_d$  can be reordered indiscriminately as well as those of $A_0$ (and $b^0$).  This would lead to different multidigraphs.
\end{remark}

\begin{myexample2Type}\label{ex:secondtype2}
Consider the linear system in Example \ref{ex:secondtype}. 
We have $\mmN_1=\{1,2\}$ and $\mmN_0=\{3,4\}$.
The following multidigraph is $A$-compatible 
\begin{center}
\begin{tikzpicture}[inner sep=1.2pt]
\node (v3) [shape=circle] at (4,0) {$3$};
\node (v1) [shape=circle] at (0,0) {$1$};
\node (v2) [shape=circle] at (2,0) {$2$};
\node (*) [shape=circle] at (6,0) {$4$};
\draw[->] (v1) to[out=10,in=170] node[above,sloped]{\footnotesize $z_2$}(v2);
\draw[->] (v2) to[out=190,in=-10] node[below,sloped]{\footnotesize $z_3$}(v1);
\draw[->] (v2) to[out=0,in=180] node[above]{\footnotesize $z_3$}(v3);
\draw[->] (v3) to[out=10,in=170] node[above]{\footnotesize $z_4$}(*);
\draw[->] (*) to[out=190,in=-10] node[below]{\footnotesize $z_5$}(v3);
\end{tikzpicture}
\end{center}
Indeed, its Laplacian $L$ is $A$-compatible:
$$
L=\left(\begin{array}{rrrr} -z_2 & z_3 & 0&0\\z_2 & -2z_3 & 0 & 0\\ 0 & z_3 & -z_4&z_5\\ 0 & 0 & z_4 & -z_5 \end{array} \right). 
$$
The multidigraph is further a P-graph that satisfies (\hyperref[condpossol2]{*}) in Theorem~\ref{thm:possol2}, since there are no edges with negative label.
Thus, we conclude by Theorem \ref{thm:possol2} that the solution to the linear system is nonnegative.
\end{myexample2Type}

\begin{example} 
Consider the following linear system in five variables $x_1,\dots, x_5$ and assume $z_1,\dots,z_9\in \Rnng$.
\[
\left(\begin{array}{ccccc}
-z_1& z_2 & 0 & 0 & 0\\ 
1 & 1 & 0 & 0 & 0\\
0&z_2&-z_3-z_4& z_5 & 0\\ 
0&0& z_3&-z_5&0\\ 
0&0&z_3&z_5&-z_6
\end{array}\right)
\left(\begin{array}{c} x_1 \\ x_2\\ x_3\\ x_4\\ x_5 \end{array}\right) +
\left(\begin{array}{c} 0 \\ -z_7\\ z_8\\ 0\\ z_9  \end{array}\right)=  
\left(\begin{array}{c} 0 \\ 0\\ 0\\ 0\\ 0\end{array}\right).
\] 
This system has the form of \eqref{eq:secondsystem} with $d=1$, $j_1=2$, $m_1=2$ and $m_0=3$.  The following is an $A$-compatible Laplacian 
\[
\left(\begin{array}{cccccc}
-z_1& z_2 & 0 & 0 & 0 & 0\\ 
\boldsymbol{z_1} & \boldsymbol{-2z_2} & \boldsymbol{0} & \boldsymbol{0} & \boldsymbol{0} & \boldsymbol{0}\\
0&z_2&-z_3-z_4& z_5 & 0 & z_8\\ 
0&0& z_3&-z_5&0 & 0\\ 
0&0&z_3&z_5&-z_6 & z_9\\
\boldsymbol{0}& \boldsymbol{0} &\boldsymbol{z_4-z_3}& \boldsymbol{-z_5} &\boldsymbol{z_6}& \boldsymbol{-z_8-z_9}
\end{array}\right).
\]
The $2$nd row of $A|b$ is replaced by another vector with the same support and such that the number of negative entries outside the diagonal are kept as small as possible.   

An example of a P-graph with Laplacian $L$ and associated map $\mu$ is:

\noindent \begin{minipage}{0.69\linewidth}
\begin{center}
\begin{tikzpicture}[inner sep=1.2pt]
\node (v1) [shape=circle] at (0,0) {$1$};
\node (v2) [shape=circle] at (2,0) {$2$};
\node (v3) [shape=circle] at (4,0) {$3$};
\node (v4) [shape=circle] at (6,0) {$4$};
\node (v5) [shape=circle] at (8,0) {$5$};
\node (*) [shape=circle] at (6,-2) {$6$};

\draw[->] (v1) to[out=10,in=170] node[above,sloped]{\footnotesize $z_1$}(v2);
\draw[->] (v2) to[out=190,in=-10] node[below,sloped]{\footnotesize $z_2$}(v1);
\draw[->] (v2) to[out=0,in=180] node[above]{\footnotesize $z_2$}(v3);
\draw[->] (v3) to[out=-70,in=170] node[left]{\footnotesize $-z_3$}(*);
\draw[->] (v3) to[out=-50,in=150] node[left]{\footnotesize $z_4$}(*);
\draw[->] (*) to[out=130,in=-30] node[right]{\footnotesize $z_8$}(v3);
\draw[->] (v3) to[out=10,in=170] node[above]{\footnotesize $z_3$}(v4);
\draw[->] (v4) to[out=190,in=-10] node[below]{\footnotesize $z_5$}(v3);
\draw[->] (v3) to[out=30,in=150] node[above]{\footnotesize $z_3$}(v5);
\draw[->] (v4) to[out=0,in=180] node[above]{\footnotesize $z_5$}(v5);
\draw[->] (v4) to[out=270,in=90] node[right]{\footnotesize $-z_5$}(*);
\draw[->] (v5) to[out=230,in=40] node[left]{\footnotesize $z_6$}(*);
\draw[->] (*) to[out=20,in=250] node[right]{\footnotesize $z_9$}(v5);
\end{tikzpicture}
\end{center}
\end{minipage}
\begin{minipage}{0.3\linewidth}
\begin{align*}
\mu(3\ce{->[-z_3]} 6)=\{3\ce{->[z_3]} 5\}\\
\mu(4\ce{->[-z_5]}6)=\{4\ce{->[z_3]} 5\}.
\end{align*}
\vfill
\end{minipage}

All edges in $\mmE^-$ have target node $6=m+1$. Therefore, by Theorem \ref{thm:possol2} the solution to the linear system is nonnegative. Indeed, the solution is
\begin{align*}
x_{1}=& \frac {z_7z_2}{z_2+z_1},\qquad
x_{2}= \frac {z_1z_7}{z_2+z_1},\qquad
x_{3}= \frac {z_1z_2z_7+(z_1+z_2)z_8}{z_4 \left( z_2+z_1 \right)},\\
x_{4}=& \frac {z_3 \left( z_1z_2z_7+(z_1+z_2)z_8 \right) }{z_4 \left( z_2+z_1 \right) z_5},\qquad
x_{5}= \frac {2\,z_1z_2z_3z_7+(z_1+z_2)(2\,z_3z_8+z_4z_9)}{z_6z_4 \left( z_2+z_1 \right) }.
\end{align*}
\end{example}

\begin{example}\label{ex:CRNT}
We consider the following reaction network  \cite{FSWFS16}.  For convenience we denote the chemical species by $X_1,\ldots,X_6$, and $\k_1,\dots,\k_{11}$ denote (unknown) reaction rate constants, one for each of the 11 reactions:
\begin{align*}
X_1 + X_5 &  \cee{<=>[\k_1][\k_2] } X_3 \cee{->[\k_3]} X_1+ X_6  &
X_2 + X_5 &  \cee{<=>[\k_4][\k_5] } X_4 \cee{->[\k_6]} X_2+ X_6\\
X_6 & \cee{->[\k_7]} X_5 \qquad
X_1  \cee{<=>[\k_8][\k_9] } X_2  &  
X_3 & \cee{<=>[\k_{10}][\k_{11}] } X_4.
\end{align*}

Further, let $x_1,\dots,x_6$  denote the concentrations (in some unit) of the corresponding species. Assuming mass-action kinetics, the evolution of the concentrations are described by an ODE system of the form,
\begin{align*}
\dot{x}_1 &= -\k_{1}x_{1}x_{5}+(\k_{2}+\k_{3})x_{3}-\k_{8}x_{1}+\k_{9}x_{2}\\ 
\dot{x}_2 &= -\k_{4}x_{2}x_{5}+(\k_{5}+\k_{6})x_{4}+\k_{8}x_{1}-\k_{9}x_{2} \\ 
\dot{x}_3 &= \phantom{-}\k_{1}x_{1}x_{5}-(\k_{2}+\k_{3})x_{3}-\k_{10}x_{3}+\k_{11}x_{4} \\ 
\dot{x}_4 &= \phantom{-}\k_{4}x_{2}x_{5}-(\k_{5}+\k_{6})x_{4}+\k_{10}x_{3}-\k_{11}x_{4} \\ 
\dot{x}_5 &= -\k_{1}x_{1}x_{5}-\k_{4}x_{2}x_{5}+\k_{2}x_{3}+\k_{5}x_{4}+\k_{7}x_{6} \\ 
\dot{x}_6 &= \phantom{-}\k_{3}x_{3}+\k_{6}x_{4}-\k_{7}x_{6}.
\end{align*}
As is evident from the equations, there are two conserved quantities, ${x}_{1} + {x}_{2}  + {x}_{3} + {x}_{4}=T_1$ and $x_3+x_4+ x_5 + x_6 = T_2$ with $T_1,T_2\in\R_{\geq 0}$. 

Using the theory developed in this section we give a one-dimensional parameterization (in $x_5$) of the positive steady state variety constrained to  the conservation equation given by $T_1$,
$$V_{T_1}=\Big\{x\in\R^6_{>0}\,|\, \dot{x}_i=0, i=1,\ldots,6\Big\}\cap \Big\{x\in\R^6_{>0}\,|\,  {x}_{1} + {x}_{2}  + {x}_{3} + {x}_{4}=T_1\Big\}.$$
Consider the variable $x_5$ as an extra constant of the system. Two equations, for example $\dot{x}_4=0$ and $\dot{x}_5=0$, are redundant at steady state because of the conservation equations, and might be removed. Thus the elements of $V_{T_1}$ fulfil:

{\smaller
\[
\left(\begin{array}{ccccc}
-(\kappa_1x_5+\kappa_8) & \kappa_9 & \kappa_2 & 0 & 0\\ 
\kappa_{8}&-(\kappa_{4}x_{5}+\kappa_9)&0&\kappa_{5}+\kappa_{6}& 0\\ 
\kappa_{1}x_{5}&0& -(\kappa_{2}+\kappa_{3}+\kappa_{10})&\kappa_{11}&0\\ 
1 & 1 & 1 & 1 & 0\\
0&0&\kappa_{3}&\kappa_{6}&-\kappa_{7}
\end{array}\right)
\left(\begin{array}{c} x_1 \\ x_2\\ x_3\\ x_4\\ x_6 \end{array}\right) =
\left(\begin{array}{c} 0 \\ 0\\ 0\\ T_1\\ 0  \end{array}\right).  
\] 
}

This system has the form of \eqref{eq:secondsystem} with $d=1$ and $j_1=4$.  Consider the following $A$-compatible Laplacian 

{\small
\[
\left(\begin{array}{cccccc}
-(\kappa_1x_5+\kappa_8) & \kappa_9 & \kappa_2 & 0 & 0& 0\\ 
\kappa_{8}&-(\kappa_{4}x_{5}+\kappa_9)&0&\kappa_{5}+\kappa_{6}& 0 & 0\\ 
\kappa_{1}x_{5}&0& -(\kappa_{2}+\kappa_{3}+\kappa_{10})&\kappa_{11}&0 & 0\\ 
\boldsymbol{0} & \boldsymbol{\kappa_{4}x_{5}} & \boldsymbol{\kappa_{10}} & \boldsymbol{-(\kappa_5+2\kappa_6+\kappa_{11})} & \boldsymbol{0} & \boldsymbol{0}\\
0&0&\kappa_{3}&\kappa_{6}&-\kappa_{7} & 0\\
\boldsymbol{0} &\boldsymbol{0} & \boldsymbol{0}& \boldsymbol{0} &\boldsymbol{\kappa_7} & \boldsymbol{0} 
\end{array}\right),
\]
}
where the $4$-th row (in bold) differs
from the one in  the matrix of the linear system, and the bottom bold row is the $(m+1)$-th  row.
 
The canonical multidigraph with this Laplacian does not have edges with negative label, hence it is a P-graph and condition (\hyperref[condpossol2]{*})   is fulfilled.  Therefore, by Theorem~\ref{thm:possol2}, the solution to the system is nonnegative:

{\smaller
\begin{align*}
x_{1} =&\frac{T_1}{q(x)} \Big( (\k_2+\k_3)\k_{4}\k_{11}x_{5}+\k_9((\k_2+\k_3)(\k_5+\k_6) +(\k_2+\k_3) \k_{11}+(\k_5+\k_6)\k_{10})\Big),  \\
x_{2} =&\frac{T_1}{q(x)} \Big( (\k_5+\k_6)\k_{1}\k_{10}x_{5}+\k_8((\k_2+\k_3)(\k_5+\k_6) +(\k_2+\k_3) \k_{11}+(\k_5+\k_6)\k_{10})\Big),  \\
x_{3} =&\frac{T_1 x_{5}}{q(x)} \Big(  \k_{1}\k_{4}\k_{11}x_{5}+\k_{1}\k_{9}(\k_5+\k_6+\k_{11})+\k_{4}\k_{8}\k_{11} \Big),  \\
x_{4} =&\frac{T_1 x_{5}}{q(x)} \Big( \k_{1}\k_{4}\k_{10}x_{5}+\k_{4}\k_{8}(\k_2+\k_3+\k_{10})+\k_{1}\k_{9}\k_{10}\Big),  \\
x_{6} =&\frac{ T_1x_5}{\k_7q(x)}  \Big(  \k_{{1}}\k_4\left(\k_{{3}}\k_{{11}}+\k_{{6}}\k_{{10}} \right)x_5+\k_{1}\k_{3}\k_{9} ( \k_{5}+\k_{11} ) +\k_{4}\k_{8}( \k_{2}\k_{6}+\k_{3}\k_{11}) + \\
&\k_{6} ( \k_{3}+\k_{10} ) ( \k_{1}\k_{9}+\k_{4}\k_{8}) \Big), 
\end{align*}
}
where $q(x)$ is the following polynomial  in $x_5$,
{\smaller 
\begin{align}
q(x)  =& \k_{1}\k_{4}  ( \k_{10}+\k_{11}  ) x_5^2 \nonumber \\ & + ((\k_2+\k_3)\k_{4}( \k_{8}+\k_{11} ) +(\k_5+\k_6)\k_{1}
 ( \k_{9}+\k_{10}  ) + ( \k_{10}+\k_{11}  ) 
 ( \k_{1}\k_{9}+\k_{4}\k_{8} ) )x_5 \nonumber \\ & + ( \k_{8}+\k_{9})( (\k_2+\k_3)  (\k_5+\k_6+ \k_{11}) +\k_{10}(\k_5+\k_6)  ). \nonumber
 \end{align}
}
This reaction network is one of many reaction networks that fulfill the hypotheses of Theorem \ref{thm:possol2}. In fact, structural conditions on the reaction network guarantee the hypotheses are fulfilled \cite{Saez:elimCRNT}.
\end{example}

We conclude with some remarks about how to find an $A$-compatible multidigraph $\mmG$ fulfilling the requirements of Theorem~\ref{thm:possol2}. 

By Remark~\ref{rmk:condP}, a necessary condition for an $A$-compatible P-graph to exist is that  the diagonal entries of $A$ with indices different from $j_i$, $i=1,\dots, d$, are nonpositive. 
Further, consider  the subsystem $A'_0x'+b^0=0$, where $A'_0$ is the square matrix consisting of the last $m_0$ columns of $A_0$ and $x'=(x_{m-m_0+1},\dots, x_m)$. Then the submultidigraph of  an $A$-compatible P-graph  induced  by the set of nodes $\mmN_0$ is a P-graph with Laplacian $L$ constructed as in \eqref{eq:Laplacian} for $A_0'$ and $b^0$, up to indexing of the nodes.  Thus, if a P-graph for such a subsystem does not exist (see Section \ref{sec:possolution}), then there is not an $A$-compatible P-graph for the original system.

If these necessary conditions for the existence of an $A$-compatible P-graph are fulfilled, we attempt to find an $A$-compatible Laplacian $L$ by minimizing the number of negative entries outside the diagonal. The focus is on the undetermined rows $j_1,\dots,j_d$ of $L$.  Consider $i\in \mmN_k$, $k>0$, and the $i$-th column sum of $A$ without $a_{j_ki}$. 
If this sum is nonpositive for $i\neq j_k$, or nonnegative for $i=j_k$, 
 then a good strategy is to define $L_{j_ki}$ as  minus this sum. This gives nonnegative entries in the $j_k$-th row of $L$ outside the diagonal and a nonpositive entry in the diagonal, while having the $i$-th entry of row $m+1$ equal to zero.

\section{The proofs of Proposition \ref{prop:solCMT2} and Theorem \ref{thm:possol2}}\label{sec:proofs}

\subsection{ Finding the solution to the linear system (Proposition \ref{prop:solCMT2})}\label{sec:computation}
We find expressions for $\det(A)$ and $\det(A_{\ell\rightarrow -b})$ in terms of  the spanning forests of $\mmG$ and the coefficients in the rows $j_1,\dots,j_d$ of $A$ and $b$.
These expressions   are found using Theorem \ref{thm:matrixtree}. The explicit solution to the linear system is subsequently found using Cramer's rule, as in the  proof of Proposition~\ref{prop:solCMT}. This  will give Proposition \ref{prop:solCMT2}.

Recall the definition of the sets $F$, $\B$ and $\B^k$ in \eqref{eq:B} and of $\beta_i$ in \eqref{eq:beta}.
We start with an observation about the form of the spanning forests in $\mmG$.  By applying Lemma~\ref{lemma:rootsforests} repeatedly to a spanning forest $\zeta\in \Theta_{\mmG}(F,B)$ with $B=\widetilde{B}\cup \{\ell\}$,  $\widetilde{B}\in \B^k$, $\ell\notin \widetilde{B}$, for $k\in \{1,\dots,d+1\}$, $\ell \in \{1,\dots,m+1\}$,  we obtain 
\begin{align}\label{eq:rootsforests}
g_\zeta(j_i) & = \begin{cases}
 \beta_i & \textrm{if }k\neq i, \\
 \ell  & \textrm{if }k= i,
\end{cases} & 
g_\zeta(m+1) & = \begin{cases}
m+1 & \textrm{if }k\neq d+1, \\
 \ell  & \textrm{if }k= d+1,
\end{cases} 
\end{align}
for $i=1,\dots,d$.
Note that if $k=d+1$ and $\ell=m+1$, we obtain the sets $B\in \B$, so the previous display applies to the sets in $\B$ as well. In particular, the map $g_\zeta$ is independent of $\zeta$ and depends only on $F$ and $B$.

Let $L$ be the Laplacian of an $A$-compatible multidigraph $\mmG$ as in  Proposition \ref{prop:solCMT2}. Then $L$ agrees with $A$ on all rows but $j_1,\dots,j_d$, that is, on all rows but the ones with indices in $F\setminus\{m+1\}$ ($A$ is an $m\times m$ matrix). Therefore, for a set $B\subseteq \{1,\dots,m\}$ with $d$ elements, we have the following equality of minors
\begin{equation}\label{eq:AvsL}
A_{(F\setminus \{m+1\},B)}=L_{(F,B\cup \{m+1\})}. 
\end{equation}

\begin{lemma}\label{lemma:detA} With the notation introduced above, for $\ell=1,\ldots,m$,
\begin{align}
\det(A) & =(-1)^{m-d}\sum_{B\in \B}\left(\prod_{i=1}^d a_{j_i\beta_i}\right)\Upsilon_{\mmG}(F,B),\label{eq:detA}\\
\det(A_{\ell\rightarrow-b}) & = (-1)^{m-d}  \left(\sum\limits_{k=1}^d (-b_{j_k}) \sum\limits_{B\in \B^{k},\ell\notin B}\left(\prod\limits_{ i=1, i\neq k}^d a_{j_i\beta_i} \right) \Upsilon_{\mmG}(F,B\cup \{\ell\})\right.\label{eq:detAb}\\
&\quad \left.+\sum\limits_{B\in \B^{d+1},\ell\notin B}\left(\prod\limits_{i=1}^d  a_{j_i\beta_i} \right) \Upsilon_{\mmG}(F,B \cup \{\ell\}) \right). \nonumber
\end{align}
\end{lemma}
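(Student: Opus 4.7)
The plan is to apply the generalised Laplace expansion of a determinant along the $d$ rows indexed by $J=\{j_1,\dots,j_d\}$,
\[
\det(M) \;=\; \sum_{B'} (-1)^{\sum_{j\in J}j + \sum_{b\in B'}b}\, \det(M_{J,B'})\,\det(M_{(J,B')}),
\]
to $M=A$ and to $M=A_{\ell\to-b}$ respectively, the sum running over size-$d$ subsets $B'\subseteq\{1,\dots,m\}$. By Definition~\ref{def:Acompatible}(ii) the rows of $L$ outside $F=J\cup\{m+1\}$ coincide with those of $A|b$, so any complementary minor of $A$ or of $A_{\ell\to-b}$ that uses only these rows can be identified with a minor of $L$ via \eqref{eq:AvsL}. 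Theorem~\ref{thm:matrixtree} then expresses that $L$-minor as a signed $\widetilde\Upsilon_\mmG$, and \eqref{eq:rootsforests} supplies the inversion counts $I(g_\zeta)$ needed to replace $\widetilde\Upsilon_\mmG$ by $\Upsilon_\mmG$.

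For \eqref{eq:detA}, the block form \eqref{eq:secondsystem} forces the $j_i$-th row of $A$ to be supported in $\mmN_i$, so only $B'\in\B^{d+1}$ (one column from each $\mmN_i$, $i=1,\dots,d$) contributes to the Laplace expansion; for such $B'$ the submatrix $A_{J,B'}$ is diagonal, giving $\det(A_{J,B'})=\prod_i a_{j_i,\beta_i}$, while the complementary minor is $L_{(F,B'\cup\{m+1\})}$. Specialising \eqref{eq:rootsforests} with $k=d+1$, $\ell=m+1$ yields $I(g_\zeta)=0$, so here $\widetilde\Upsilon_\mmG=\Upsilon_\mmG$. Combining the Laplace sign $(-1)^{\sum j_i+\sum\beta_i}$ with the Matrix-Tree sign $(-1)^\epsilon$, where $\epsilon\equiv m-d+\sum j_i+\sum\beta_i\pmod 2$, collapses everything to the uniform factor $(-1)^{m-d}$; identifying $B=B'\cup\{m+1\}\in\B$ produces \eqref{eq:detA}.

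For \eqref{eq:detAb}, apply the same expansion to $A_{\ell\to-b}$ and split on whether $\ell\in B'$. If $\ell\notin B'$, the submatrix $(A_{\ell\to-b})_{J,B'}$ has only $A$-entries, so again $B'\in\B^{d+1}$ and its determinant is $\prod_i a_{j_i,\beta_i}$; the complementary minor now has column $\ell$ equal to $-b$, while otherwise agreeing with $L_{(F,B'\cup\{\ell\})}$ (whose last column, indexed $m+1$, is $b$). Passing between the two requires negating that column and sliding it from the last slot to the slot of $\ell$, contributing $-(-1)^{(m-d)-r'}$ where $r'$ is the rank of $\ell$ in $\{1,\dots,m\}\setminus B'$. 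In parallel, \eqref{eq:rootsforests} with $k=d+1$ gives $I(g_\zeta)=d+1-r$, with $r$ the rank of $\ell$ in $B'\cup\{\ell\}$; since $r+r'=\ell+1$, all signs telescope again to $(-1)^{m-d}$, yielding the $k=d+1$ summand of \eqref{eq:detAb} (reading $-b_{j_{d+1}}=1$). If instead $\ell\in B'$, write $B'=\widetilde B\cup\{\ell\}$: for $(A_{\ell\to-b})_{J,B'}$ to be non-singular, exactly one row $j_k$ must match column $\ell$ (with entry $-b_{j_k}$) while the remaining rows $j_i$ match columns of $\widetilde B$ forced by the block support of $A$. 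This pins down a unique $k\in\{1,\dots,d\}$ and $\widetilde B=\{\beta_i:i\neq k\}$ with $\beta_i\in\mmN_i$, so $B:=\widetilde B\cup\{m+1\}\in\B^k$; the complementary minor, free of $b$-columns, equals $L_{(F,B\cup\{\ell\})}$ by \eqref{eq:AvsL}. By Lemma~\ref{lemma:rootsforests}(ii) only the sub-cases $\ell\in\mmN_k$ and $\ell\in\mmN_0\setminus\{m+1\}$ can contribute, and a direct enumeration of the matching permutation inside $(A_{\ell\to-b})_{J,B'}$ shows that its sign exactly cancels $(-1)^{I(g_\zeta)}$ read off from \eqref{eq:rootsforests} (both trivial when $\ell\in\mmN_k$; both equal to $(-1)^{d-k}$ otherwise). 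The Laplace and Matrix-Tree signs again combine to $(-1)^{m-d}$, producing the $k$-th summand of \eqref{eq:detAb}.

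The chief obstacle is the purely combinatorial bookkeeping of the four sign contributions---the Laplace expansion sign, the $(-1)^\epsilon$ from Theorem~\ref{thm:matrixtree}, the permutation sign inside $\det((A_{\ell\to-b})_{J,B'})$ when $\ell\in B'$, and the inversion sign $(-1)^{I(g_\zeta)}$---and verifying that in every case they conspire to the uniform overall factor $(-1)^{m-d}$. The two key identities that make this work are $r+r'=\ell+1$ (in the case $\ell\notin B'$) and the synchronised cancellation of the matching sign with $(-1)^{I(g_\zeta)}$ in the case $\ell\in B'$.
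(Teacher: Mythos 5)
Your proof is correct, and for \eqref{eq:detA} it coincides with the paper's argument (Laplace expansion along the rows $j_1,\dots,j_d$, identification of the complementary minors with minors of $L$, and Theorem~\ref{thm:matrixtree}). For \eqref{eq:detAb}, however, you take a genuinely different route. The paper first expands $\det(A_{\ell\to-b})$ along the replaced column $\ell$, then expands each cofactor along the $j$-rows; the contribution of the entries $b_k$ with $k>m-m_0$ then appears as a sum of terms $\widetilde{\Upsilon}_{\mmG}(F\cup\{k\},B\cup\{\ell\})$ over an \emph{enlarged} root-and-source pair, and a separate combinatorial identity (equation \eqref{eq:detAb5}, proved by a bijection that attaches an edge out of node $m+1$ to merge two components of a spanning forest) is needed to recombine these into $\Upsilon_{\mmG}(F,B\cup\{\ell\})$ with $B\in\B^{d+1}$. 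You instead expand along the rows $J$ directly and split on whether $\ell\in B'$: when $\ell\notin B'$ the entire $-b^0$ part of the replaced column sits inside the complementary minor, and after negating and repositioning that column you recognize it as the $(m+1)$-th column of $L$, so the All Minors Matrix-Tree Theorem absorbs the merging step automatically and produces the $k=d+1$ summand in one stroke; when $\ell\in B'$ the block support of $A$ pins down a unique $k$ and yields the $k$-th summand. Your approach thus trades the paper's explicit spanning-forest bijection for heavier but entirely routine sign bookkeeping (the identity $r+r'=\ell+1$ and the matching of the permutation sign with $(-1)^{I(g_\zeta)}$, both of which check out), and it renders the paper's vanishing statement \eqref{eq:zeroB} an automatic consequence of Lemma~\ref{lemma:rootsforests}(ii) rather than a step of the proof. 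Both arguments are valid; the paper's makes the role of the edges leaving node $m+1$ explicit, while yours is more compact and keeps all graph theory confined to a single application of Theorem~\ref{thm:matrixtree}.
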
 
\begin{proof} 
The nonzero entries of the $j_i$-th row of $A$ are in columns with index in $\mmN_i$. To prove \eqref{eq:detA} we consider the Laplace expansion of the determinant of $A$ along the rows $j_1, \dots,j_d$ \cite{FlexLaplace,Rose:LinAlg}. 
 Using \eqref{eq:AvsL} and Theorem \ref{thm:matrixtree} we have 
\begin{align*}
\det(A) & =\sum_{B\in \B^{d+1}}(-1)^{\sum_{i=1}^d (j_i+\beta_i)}\left(\prod_{i=1}^d \,  a_{j_i\beta_i} \right)A_{(F\setminus \{m+1\},B)} \\
&=(-1)^{m-d}\sum_{B\in \B}\left(\prod_{i=1}^d a_{j_i\beta_i}\right)\widetilde{\Upsilon}_{\mmG}(F,B).
\end{align*}
By \eqref{eq:rootsforests} with $k=d+1$ and $\ell=m+1$, if $\zeta\in\Theta_{\mmG}(F,B)$ with $B\in \B$, then $I(g_\zeta)=0$ and so $\widetilde{\Upsilon}_{\mmG}(F,B)=\Upsilon_{\mmG}(F,B)$, see \eqref{ypsilon}. This concludes the proof of \eqref{eq:detA}.

For $B\subseteq \{1,\dots,m+1\}$, 
we let  $$\varepsilon(\ell,B)=\big|\{i\in B\st \ell<i<m+1\}\big|,$$ 
and for $B\in\B^{k}$, $k\in\{1,\ldots,d+1\}$, we define 
$$\alpha_k(B)=\sum_{\begin{subarray}{c}i=1,\,  i\neq k\end{subarray}}^d(j_i+\beta_i)\quad\textrm{ and }\quad w_k(B)=\prod_{\begin{subarray}{c}i=1, \, i\neq k\end{subarray}}^d a_{j_i\beta_i}.$$

To prove \eqref{eq:detAb} we consider the Laplace expansion of the determinant of $A_{\ell\rightarrow-b}$ along column $\ell\in\{1,\ldots,m\}$. For $i\leq m-m_0$, we have $b_i=0$ if $i\neq j_k$ for all $k=1,\dots,d$. It gives
\begin{equation}\label{eq:detAb2}
\det(A_{\ell\rightarrow-b})=\sum_{k=1}^d (-b_{j_k})(-1)^{j_k+\ell} A_{(\{j_k\},\{\ell\})}+\!\!\!\sum_{k=m-m_0+1}^m(-b_k) (-1)^{k+\ell}A_{(\{k\},\{\ell\})}.
\end{equation}  
Fix  $k\in \{1,\dots,d\}$.  To compute $A_{(\{j_k\},\{\ell\})}$, we consider the Laplace expansion of the determinant of the submatrix $\widehat{A}$ of $A$, given by removing row $j_k$ and column $\ell$, along  the rows
$j_1,\dots$, $j_{k-1}$, $j_{k+1}-1,\dots,j_{d}-1$. These rows correspond to the rows $j_1,\dots$, $j_{k-1}$, $j_{k+1},\dots,j_{d}$ of $A$. 
Given $j\in  \{1,\dots,m\}$, the $j$-th column of $\widehat{A}$ is the $j$-th column of $A$ if $j<\ell$ and the $(j+1)$-th column if $j\geq \ell$.   
By \eqref{eq:AvsL} and Theorem \ref{thm:matrixtree} we have
\begin{align*}
A_{(\{j_k\},\{\ell\})} = &\sum_{B\in \B^{k}, \ell\notin B}(-1)^{\alpha_k(B)-(d-k)+\varepsilon(\ell,B)}  w_k(B)A_{(F\setminus \{m+1\},(B \setminus \{m+1\})\cup \{\ell\})}\\
= &\sum_{B\in \B^{k}, \ell\notin B} w_k(B)(-1)^{-d+k+\varepsilon(\ell,B)+m+1-d-1+\ell+j_k}\widetilde{\Upsilon}_{\mmG}(F,B\cup \{\ell\}).
\end{align*}

Let $B\in \B^{k}$, $k\in\{1,\ldots,d\}$ and $\ell \notin B$.  By Lemma \ref{lemma:rootsforests}(ii), $\Theta_{\mmG}(F,B\cup \{\ell\})=\emptyset$ 
if $\ell\in\mmN_i$, $i\not=k$ and $i>0$. By \eqref{eq:rootsforests},  we further have

\smallskip
\begin{itemize}
\item[-]  If $\ell\in\mmN_k$, then $I(g_\zeta)=0$ and $\varepsilon(\ell, B)=d-k$. 
\item[-]  If $\ell\in\mmN_0$, 
then $\varepsilon(\ell, B)=0$ and $I(g_\zeta)=d-k$ since there are $d-k$ inversions in $g_\zeta$: we have $j_i>j_k$ for $i>k$ and $g_\zeta(j_i)=\beta_i<\ell=g_\zeta(j_k)$.
\end{itemize}
Therefore,
\begin{equation}\label{eq:detAb3}
A_{(\{j_k\},\{\ell\})}=
(-1)^{\ell+j_k+m-d} \sum\limits_{B\in \B^k,\ell\notin B}\omega_k(B)\Upsilon_{\mmG}(F,B\cup \{\ell\}).
\end{equation}

Secondly, we find $A_{(\{k\},\{\ell\})}$ for $m \geq k>m-m_0$ similarly to above, by considering the Laplace expansion of the submatrix of $A$ obtained by removing row $k$ and column $\ell$, along the rows  $j_1, \dots,j_d$.  By \eqref{eq:AvsL} and Theorem \ref{thm:matrixtree} we obtain 
\begin{align*}
A_{(\{k\},\{\ell\})}&=\sum_{B\in \B^{d+1},\ell\notin B}(-1)^{\alpha_{d+1}(B)+\varepsilon(\ell,B)}w_{d+1}(B)A_{((F\setminus \{m+1\})\cup \{k\},B\cup \{\ell\})}\\
&=\sum_{B\in \B,\ell\notin B}w_{d+1}(B)(-1)^{ \varepsilon(\ell,B)+m+1-d-2+k+\ell}\widetilde{\Upsilon}_{\mmG}(F\cup \{k\},B\cup \{\ell\}).
\end{align*}
By Lemma~\ref{lemma:rootsforests}(ii),  for $\ell\leq m-m_0$, we have $\Theta_{\mmG}(F\cup \{k\},B\cup \{\ell\})=\emptyset$ if $B\in \B$, and also $\Theta_{\mmG}(F,B\cup \{\ell\})=\emptyset$ for $B\in \B^{d+1}$. Thus, from above
\begin{align}\label{eq:zeroB}
0 &= \sum_{k=m-m_0+1}^m(-b_k) (-1)^{k+\ell}A_{(\{k\},\{\ell\})}  \\
&= \sum\limits_{B\in \B^{d+1},\ell\notin B}\left(\prod\limits_{i=1}^d  a_{j_i\beta_i} \right) \Upsilon_{\mmG}(F,B \cup \{\ell\}).\nonumber
\end{align}

If $\ell>m-m_0$, then $\varepsilon(\ell, B)=0$. Using Lemma \ref{lemma:rootsforests}(i), we have for  $\zeta\in\Theta_{\mmG}(F\cup\{k\},B\cup \{\ell\})$ that $g_\zeta(j_i)=\beta_i$ for all $i\in \{1,\dots,d\}$, $g_\zeta(m+1)=m+1$ and $g_\zeta(k)=\ell$. Thus  
$I(g_\zeta)=0$ and
\begin{equation}\label{eq:detAb4}
A_{(\{k\},\{\ell\})}=(-1)^{\ell+k+m-d+1}\sum_{B\in \B,\ell\notin B}w_{d+1}(B)\, \Upsilon_{\mmG}(F\cup \{k\},B\cup \{\ell\}).
\end{equation}

It only remains to prove that for $\ell>m-m_0$, we have
\[
\sum\limits_{k=m-m_0+1}^m\!\!b_k\!\!\!\sum\limits_{B\in \B, \ell\notin B}\!\!\!w_{d+1}(B) \Upsilon_{\mmG}\big(F\cup \{k\},B \cup \{\ell\}\big) = \!\!\!\sum\limits_{B\in \B^{d+1},\ell\notin B}\!\!\!w_{d+1}(B) \Upsilon_{\mmG}\big(F,B \cup \{\ell\}\big),
\]
where the left side is \eqref{eq:detAb4} summed over $k$.
Note that for $B\in\B$, we have $m+1\in B$ and hence $B\setminus\{m+1\}\in\B^{d+1}$. Therefore it is sufficient to prove that for $B\in \B$ and $\ell\notin B$, we have
\begin{equation}\label{eq:detAb5}
\sum\limits_{k=m-m_0+1}^mb_k \Upsilon_{\mmG}\big(F\cup \{k\},B \cup \{\ell\}\big)= \Upsilon_{\mmG}\big(F,B \cup \{\ell\}\setminus \{m+1\}\big).
\end{equation}
Let $\mmE_{m+1,k}$ be the set of edges in $\mmG$ with source $m+1$ and target $k$. Note that $b_k=\sum_{e\in\mmE_{m+1, k}}\pi(e)$, so it is sufficient to show that 
\[
\bigcup_{k=m-m_0+1}^m \Big\{e\cup \zeta \st e\in\mmE_{m+1,k},\ \zeta\in \Theta_{\mmG}(F\cup \{k\},B \cup \{\ell\})\Big\}=\Theta_{\mmG}\big(F,B \cup \{\ell\}\setminus \{m+1\}\big),
\]
as each element $e\cup\zeta$ on the left side has label $\pi(e)$ times the label of the spanning tree $\zeta\in \Theta_{\mmG}\big(F,B \cup \{\ell\}\setminus \{m+1\}\big)$.

We will show the equality by proving that the left side is contained in the right side, and \emph{vice versa}.
Consider a spanning forest $\zeta\in \Theta_{\mmG}(F\cup \{k\},B \cup \{\ell\})$ and  $e\in\mmE_{m+1,k}$. Then one connected component of $\zeta$ is a tree rooted at $m+1$, and another a tree rooted at $\ell$ that contains $k$. 
In $\zeta\cup e$, these two connected components are merged into  a tree rooted at $\ell$ that contains $m+1$. Hence the inclusion $\subseteq$ holds.

 To prove the other inclusion, we note that  a spanning forest $\zeta\in\Theta_{\mmG}(F,B \cup \{\ell\}\setminus \{m+1\})$ contains exactly one edge $e$ with source $m+1$. 
Since $\mmG$ is $A$-compatible, the target of this edge belongs to $\mmN_0$ (see Figure~\ref{fig:struct}), that is, $e\in\mmE_{m+1,k}$ with  $m\geq k>m-m_0$. One connected component of the subgraph $\zeta\setminus \{e\}$ is a tree rooted at $m+1$ and another connected component  is a tree rooted at $\ell$ that contains $k$. So the desired inclusion  holds; hence the equality holds.

The proof of  equation \eqref{eq:detAb} now follows by combining the equations \eqref{eq:detAb2}--\eqref{eq:detAb5}. 
\end{proof}

By Cramer's rule,  the solution to the linear system is
$$x_\ell =\frac{\det(A_{\ell\rightarrow-b})}{\det(A)}.$$
Now, using Lemma~\ref{lemma:detA}, we obtain the expression in the statement of Proposition~\ref{prop:solCMT2}, 
after combining the two sums of \eqref{eq:detAb} into one using $b_{j_{d+1}}=-1$.

\subsection{Nonnegativity of the solution (Theorem \ref{thm:possol2})}\label{sec:positivity}
The aim of this section is to  prove  Theorem \ref{thm:possol2}, that is, to prove that the solution to the linear system \eqref{eq:secondsystem} is nonnegative under certain conditions. To do so, we prove that a decomposition, similar to the one  in \eqref{eq:decompTheta}, holds for $\Theta_{\mmG}(F,B)$ for certain subsets $B\subseteq \{1,\dots,m+1\}$.

Assume the multidigraph $\mmG$ is an $A$-compatible P-graph that satisfies condition (\hyperref[condpossol2]{*}) of Theorem \ref{thm:possol2}. In the lemmas below we consider 

\begin{center}
$B=\widetilde{B}\cup \{\ell\}$, where $\widetilde{B}\in \B^{k}$ for  $k\in \{1,\dots, d+1\}$ and $\ell\in \{1,\dots,m+1\}, \ell\notin \widetilde{B}$.
\end{center}

\begin{lemma}\label{lem:paths}
Let $\zeta\in \Theta_\mmG(F,B)$.
\begin{enumerate}[(i)]
\item Any path from $j_k\in F$, $k\in \{1,\dots,d\}$, to $i\in B$ in $\zeta$, does not contain an edge in $\mmE^-$.

\item If $\zeta'\in \Theta_\mmG(B)$ is such that the connected component of $\zeta'$ containing $j_k\in F$ has root $g_\zeta(j_k)$ for all $k=1,\dots,d$, then also the connected component containing $m+1$ has root $g_\zeta(m+1)$. In particular, $\zeta'\in \Theta_\mmG(F,B)$.
\end{enumerate}
 
\end{lemma}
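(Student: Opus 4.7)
For part (i), the plan is to exploit that $\zeta\in\Theta_{\mmG}(F,B)$ partitions $F$ among the trees: each connected component contains exactly one node of $F$. In particular, $j_k$ and $m+1$ lie in different trees. A path from $j_k$ to some $i\in B$ inside $\zeta$ (a forest) can only exist when $i=g_\zeta(j_k)$ and, being in a tree, it is unique. From \eqref{eq:rootsforests}, $g_\zeta(j_k)$ is either $\beta_k\in\mmN_k$ (when $k$ is not the special index $s$ with $\widetilde{B}\in\B^s$) or equals $\ell$ (when $k=s$); in the latter case the observation that $m+1\in\widetilde{B}$ whenever $s\neq d+1$ forces $\ell\neq m+1$. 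Hence $g_\zeta(j_k)\in\{1,\dots,m\}$ and condition (\hyperref[condpossol2]{*}) is applicable: if the unique path from $j_k$ to $g_\zeta(j_k)$ contained an edge in $\mmE^-$, then it would pass through $m+1$, placing $m+1$ in the tree of $j_k$. This contradicts the fact that every tree of $\zeta$ contains a single element of $F$.

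For part (ii), the plan is to show that $m+1$ cannot land in any of the $d$ trees of $\zeta'$ containing the $j_k$'s, and then to read off $\zeta'\in\Theta_{\mmG}(F,B)$ from this. Split into two subcases according to whether the distinguished index $s$ (with $\widetilde{B}\in\B^s$) satisfies $s\leq d$ or $s=d+1$. When $s\leq d$, \eqref{eq:rootsforests} gives $g_\zeta(m+1)=m+1$, and since $m+1\in B$ is by definition the root of its own tree in $\zeta'$, the conclusion is immediate.

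The substantive subcase is $s=d+1$, where $B=\{\beta_1,\dots,\beta_d,\ell\}$ with $\beta_k\in\mmN_k$ for $k=1,\dots,d$, and $g_\zeta(m+1)=\ell$. By hypothesis, the trees of $\zeta'$ containing $j_1,\dots,j_d$ are rooted at $\beta_1,\dots,\beta_d$ respectively, accounting for $d$ of the $d+1$ components of $\zeta'$; the remaining component is rooted at $\ell$. Applying Lemma \ref{lemma:rootsforests}(i) to $\zeta'$ at the node $m+1\in\mmN_0$ forces the tree of $\zeta'$ containing $m+1$ to be rooted in $\mmN_0$. Since $\beta_k\in\mmN_k$ with $k\geq 1$ is disjoint from $\mmN_0$, the node $m+1$ cannot lie in any of the trees rooted at $\beta_1,\dots,\beta_d$, hence must lie in the tree rooted at $\ell=g_\zeta(m+1)$, as required.

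Finally, combining both parts of (ii), the map $F\to B$ induced by $\zeta'$ coincides with $g_\zeta|_F$, which is a bijection, so every tree of $\zeta'$ contains exactly one node of $F$, which gives $\zeta'\in\Theta_{\mmG}(F,B)$. I do not anticipate a genuine obstacle: the argument is essentially bookkeeping built on Lemma \ref{lemma:rootsforests}(i) together with condition (\hyperref[condpossol2]{*}); the one place care is required is recognizing that the structural obstruction preventing $m+1$ from sitting in a $\mmN_k$-rooted tree (for $k\geq 1$) is what rules out the ``wrong'' assignment of $m+1$ in $\zeta'$.
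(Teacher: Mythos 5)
Your proposal is correct and follows essentially the same route as the paper: part (i) is the same contradiction via condition (\hyperref[condpossol2]{*}) forcing $m+1$ into the tree of $j_k$, and part (ii) is the same application of Lemma~\ref{lemma:rootsforests}(i) combined with the fact that $B$ has a unique root available in $\mmN_0$. Your extra bookkeeping (identifying $i=g_\zeta(j_k)$, checking $\ell\neq m+1$ so that (\hyperref[condpossol2]{*}) applies, and the case split on $s$ versus the paper's split on whether $m+1\in B$) only makes explicit steps the paper leaves implicit.
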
 
 \begin{proof}
 (i) By condition (\hyperref[condpossol2]{*}), any such path goes through $m+1$. But this implies that $j_k$ and $m+1$, which both are in $F$, also are in the same connected component of $\zeta$, contradicting the definition of $\Theta_{\mmG}(F,B)$.

(ii) Let $i\in B$ be the root of the connected component of $\zeta'$ containing $m+1$. 
By Lemma~\ref{lemma:rootsforests}(i), both $i$ and $g_\zeta(m+1)$ belong to $\mmN_0$.
If  $m+1\in B$, then  necessarily  $i=m+1=g_\zeta(m+1)$. Otherwise, by our choice of sets $B$,  $g_\zeta(m+1)$ is the only element both in $B$ and $\mmN_0$. Thus  it must hold that $i=g_\zeta(m+1)$.
 \end{proof}

\begin{lemma}\label{lemma:maxEF}
For $\zeta\in \Theta_{\mmG}(F,B)$, we define 
\[
\mmE^{F}_\zeta=\big\{E\subseteq \zeta\cap \im\mu \st (\zeta\setminus E)\cup \mu^{*}(E)\in \Theta_{\mmG}(F,B)\big\}\subseteq \mathcal{P}(\mmE^+).
\]
The set $\mmE_\zeta^F$ is closed under union, that is, if $E_1,E_2\in \mmE_\zeta^F$, then $E_1\cup E_2\in \mmE_\zeta^F$. 
\end{lemma}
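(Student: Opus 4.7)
The approach mirrors the proof of Lemma~\ref{lemma:maxEzeta} but additionally tracks which node in $B$ is the root of each component containing a $j_k$. Given distinct $E_1, E_2\in\mmE^F_\zeta$, set $\zeta_i=(\zeta\setminus E_i)\cup\mu^{*}(E_i)\in\Theta_{\mmG}(F,B)$ for $i=1,2$, and let $E_3=E_1\cup E_2$ and $\zeta_3=(\zeta\setminus E_3)\cup\mu^{*}(E_3)$. The first step is to show $\zeta_3\in\Theta_{\mmG}(B)$ by the verbatim argument of Lemma~\ref{lemma:maxEzeta}: any cycle in $\zeta_3$ contains at most one edge of $\mmE^-$ by Definition~\ref{def:Pgraph}(\ref{cond2Pgraph}), so its negative edges come entirely from $\mu^{*}(E_1)$ or entirely from $\mu^{*}(E_2)$; hence the cycle would already be present in $\zeta_1$ or $\zeta_2$, contradicting the fact that both are forests. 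The observation recalled before Lemma~\ref{lemma:maxEzeta}, about replacing edges while preserving sources, then gives $\zeta_3\in\Theta_{\mmG}(B)$.

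The new ingredient is upgrading from $\Theta_{\mmG}(B)$ to $\Theta_{\mmG}(F,B)$. The key claim is that for each $k\in\{1,\dots,d\}$, no edge of $E_3$ lies on the unique directed path $P_k$ from $j_k$ to $g_\zeta(j_k)$ in $\zeta$. Suppose for contradiction that some $e'\in E_1$ lies on $P_k$, and pick the one closest to $j_k$. All preceding edges of $P_k$ lie in $\zeta\setminus E_1\subseteq\zeta_1$, so the initial subpath of $P_k$ reaching $s(e')$ is still present in $\zeta_1$. Since each non-root node has a unique outgoing edge in a spanning forest, in $\zeta_1$ that outgoing edge at $s(e')=s(\mu^{*}(e'))$ is $\mu^{*}(e')\in\mmE^-$. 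Thus the path from $j_k$ to its root in $\zeta_1$ contains a negative edge, contradicting Lemma~\ref{lem:paths}(i) applied to $\zeta_1\in\Theta_{\mmG}(F,B)$. The same argument rules out $e'\in E_2$, proving the claim.

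Given the claim, each path $P_k$ survives unchanged in $\zeta_3$, so $j_k$ and $g_\zeta(j_k)$ lie in the same connected component of $\zeta_3$ and that component is rooted at $g_\zeta(j_k)\in B$. Applying Lemma~\ref{lem:paths}(ii) to $\zeta_3\in\Theta_{\mmG}(B)$, the component of $m+1$ in $\zeta_3$ is rooted at $g_\zeta(m+1)$, hence $\zeta_3\in\Theta_{\mmG}(F,B)$ and $E_3\in\mmE^F_\zeta$.

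The main obstacle I anticipate is the ``first edge on $P_k$'' argument, since in principle other edges of $E_1$ could alter the shape of $\zeta_1$ before one reaches $s(e')$; but this worry dissolves because the subpath in question uses only edges of $\zeta\setminus E_1$, which are common to $\zeta$ and $\zeta_1$, and a spanning forest assigns at most one outgoing edge per node. Everything else is bookkeeping against the definitions and an appeal to the two parts of Lemma~\ref{lem:paths}.
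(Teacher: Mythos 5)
Your proof is correct and follows essentially the same route as the paper's: establish $\zeta_3\in\Theta_{\mmG}(B)$ via (the argument of) Lemma~\ref{lemma:maxEzeta}, show the paths from the $j_k$ to their roots in $\zeta$ avoid $E_1\cup E_2$ by deriving a contradiction with Lemma~\ref{lem:paths}(i) in $\zeta_1$ or $\zeta_2$, and conclude with Lemma~\ref{lem:paths}(ii). Your ``edge of $E_1$ closest to $j_k$'' refinement just makes explicit why the path in $\zeta_1$ reaches $s(e')$ and is forced onto $\mu^{*}(e')$, which the paper states more tersely.
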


\begin{proof}
Since $\Theta_{\mmG}(F,B)\subseteq \Theta_{\mmG}(B)$, we have  $\mmE^{F}_\zeta\subseteq \mmE_\zeta$,  and since $\mmE_\zeta$ is closed under union by Lemma \ref{lemma:maxEzeta}, we have $E_1\cup E_2\in \mmE_\zeta$ if  $E_1,E_2\in \mmE^{F}_\zeta$.  Hence $\zeta_3=(\zeta\setminus E_1\cup E_2)\cup \mu^{*}(E_1\cup E_2)$ is   a spanning forest with $d+1$ connected components, each with a root in $B$. We show that if $j\in F$ and $i\in B$ are in the same connected component of $\zeta$, then they are also in the same connected component of $\zeta_3$. By Lemma~\ref{lem:paths}(ii), it is enough to show this for $j\neq m+1$.

Consider the unique path from $j\neq m+1$ to $i$ (assuming $j\neq i$) in $\zeta$.  Assume this path contains an edge $e\in E_1\cup E_2$, say  $e\in E_1$.
Then $\zeta_1=(\zeta\setminus E_1)\cup \mu^{*}(E_1)$, which belongs to $ \Theta_{\mmG}(F,B)$ by hypothesis, has also a path from $j$ to $i$ by \eqref{eq:rootsforests}. Since every node different from the root of a rooted tree has exactly one outgoing edge, then $\mu^*(e)\in\mmE^-$ must belong to this path because it has source  $s(e)$.
But this contradicts Lemma~\ref{lem:paths}(i). Therefore  this path does not contain an edge in $E_1\cup E_2$ and hence it belongs  to $\zeta_3$. This implies that $j$ and  $i$ belong to the same connected component of $\zeta_3$.   This shows that $\zeta_3\in\Theta_{\mmG}(F,B)$.
\end{proof}

It is a consequence of the lemma that the set  $\mmE^{F}_\zeta$ has a unique maximum with respect to inclusion, which we denote by  $E^{F}_\zeta$.  Define
\begin{equation*}
\Lambda_\mmG(F,B)=\left\lbrace \zeta \in \Theta_\mmG(F,B)\st \mmE^{F}_\zeta=\emptyset\right\rbrace,
\end{equation*}
and a  surjective map  by
$$
\psi_F\colon \Theta_\mmG(F,B)\rightarrow\Lambda_\mmG(F,B),\quad  \psi_F(\zeta)=\Big(\zeta\setminus E^{F}_\zeta\Big)\cup \mu^{*}\Big(E^{F}_\zeta\Big).
$$
Then we have the following decomposition, analogous to the decomposition in \eqref{eq:decompTheta},
\begin{equation}\label{eq:decompThetaF}
\Theta_\mmG(F,B)=\bigsqcup\limits_{\zeta\in\Lambda_\mmG(F,B)}\psi^{-1}_F(\zeta).
\end{equation}

\begin{lemma}\label{lemma:PrepsiF}
For $\zeta\in \Lambda_{\mmG}(F,B)$, it holds
\[
\psi^{-1}_{F}(\zeta)=\left\{E\cup (\zeta\cap \mmE^+) \st E\in  \underset{e\in\zeta\cap \mmE^-}{\mathlarger{\mathlarger\odot}}(\{e\}\cup \mu(e))   \right\}.
\]
\end{lemma}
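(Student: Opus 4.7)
The plan is to mimic the proof of Lemma~\ref{lemma:decompPrePsi}, with the refinement that we must work with spanning forests in $\Theta_\mmG(F,B)$ rather than $\Theta_\mmG(B)$. Both inclusions are established separately, and the only genuinely new ingredients are Lemma~\ref{lem:paths} (to handle the $F$-root constraint) and a careful maximality argument using $\zeta \in \Lambda_\mmG(F,B)$.

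For the inclusion $\subseteq$, suppose $\psi_F(\zeta') = \zeta$. By the definition of $\psi_F$ we have $\zeta' = (\zeta \setminus \mu^*(E^F_{\zeta'})) \cup E^F_{\zeta'}$, and since $E^F_{\zeta'} \subseteq \mmE^+$ and $\mu^*(E^F_{\zeta'}) \subseteq \mmE^-$, setting $E = (\zeta' \cap \mmE^-) \cup E^F_{\zeta'}$ gives $\zeta' = (\zeta \cap \mmE^+) \cup E$, exactly as in the proof of Lemma~\ref{lemma:decompPrePsi}. One then checks that for each $e \in \zeta \cap \mmE^-$, exactly one element of $\{e\} \cup \mu(e)$ lies in $E$: if $e \notin \mu^*(E^F_{\zeta'})$ then $e \in E$ and no $e' \in \mu(e)$ can lie in $E$; otherwise, $e \notin E$ and exactly one $e' \in \mu(e) \cap E^F_{\zeta'}$ lies in $E$, the uniqueness relying on Definition~\ref{def:Pgraph}(\ref{cond3aPgraph}) together with the fact that a spanning forest has at most one outgoing edge per non-root node. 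Pairwise disjointness of the sets $\{e\} \cup \mu(e)$ follows from Definition~\ref{def:Pgraph}(\ref{cond3cPgraph}).

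For the inclusion $\supseteq$, fix $E$ in the $\odot$-set and set $\zeta' = (\zeta \cap \mmE^+) \cup E$. Iterating Lemma~\ref{lemma:decompPrePsiEdges} over the edges $e \in \zeta \cap \mmE^-$ whose representative in $E$ lies in $\mu(e)$ yields $\zeta' \in \Theta_\mmG(B)$. To upgrade this to $\zeta' \in \Theta_\mmG(F,B)$, I would invoke Lemma~\ref{lem:paths}(i): for each $k \in \{1,\dots,d\}$, the unique path in $\zeta$ from $j_k$ to the root $g_\zeta(j_k) \in B$ uses only positive-labelled edges, hence is preserved in $\zeta'$, so $g_\zeta(j_k)$ is the root of the component of $j_k$ in $\zeta'$. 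Lemma~\ref{lem:paths}(ii) then promotes $\zeta'$ to $\Theta_\mmG(F,B)$.

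What remains, and what I expect to be the main subtlety, is verifying $\psi_F(\zeta') = \zeta$. Setting $\widetilde{E} = E \cap \mmE^+$, a direct computation (using that $\widetilde{E}$ is disjoint from $\zeta \cap \mmE^+$) gives $(\zeta' \setminus \widetilde{E}) \cup \mu^*(\widetilde{E}) = \zeta$, so $\widetilde{E} \in \mmE^F_{\zeta'}$. The maximality of $E^F_{\zeta'}$ is the delicate point: writing $E^* = E^F_{\zeta'} \setminus \widetilde{E}$, any element of $\zeta' \cap \im\mu$ not in $\widetilde{E}$ must already lie in $\zeta \cap \mmE^+$, so $E^* \subseteq \zeta \cap \mmE^+$. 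Then $(\zeta' \setminus E^F_{\zeta'}) \cup \mu^*(E^F_{\zeta'}) = (\zeta \setminus E^*) \cup \mu^*(E^*)$, and the latter lying in $\Theta_\mmG(F,B)$ would exhibit $E^*$ as a nonempty element of $\mmE^F_\zeta$, contradicting $\zeta \in \Lambda_\mmG(F,B)$. Hence $\widetilde{E} = E^F_{\zeta'}$ and $\psi_F(\zeta') = \zeta$ as required.
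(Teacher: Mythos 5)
Your proof is correct and follows essentially the same route as the paper: the inclusion $\subseteq$ by mimicking Lemma~\ref{lemma:decompPrePsi}, and the inclusion $\supseteq$ via Lemma~\ref{lemma:decompPrePsiEdges} together with Lemma~\ref{lem:paths}(i) and (ii). The only difference is that you spell out the maximality argument showing $\psi_F(\zeta')=\zeta$ (via $\widetilde{E}=E\cap\mmE^+$ and $E^*=E^F_{\zeta'}\setminus\widetilde{E}$), a step the paper dismisses as straightforward; your verification of it is correct.
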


\begin{proof}
The inclusion $\subseteq$ is proven analogously to the proof of Lemma~\ref{lemma:decompPrePsi}. 
By Lemma~\ref{lemma:decompPrePsiEdges}, we know that the set on the right side consists of elements in $\Theta_{\mmG}(B)$. 
Further, if  $e\in\zeta\cap\mmE^-$ and $e'\in\mu(e)$, consider the forest $\zeta'=(\zeta\setminus \{e\})\cup \{e'\}.$ We will show that $\zeta'\in \Theta_{\mmG}(F,B)$.
Given $j\in F$, $j\neq m+1$, such that $g_\zeta(j)=i$, 
the path connecting $j$ and $i$ (if any) in $\zeta$ does not contain $e$ by Lemma~\ref{lem:paths}(i). Thus the path  is also in $\zeta'$ and $j,i$ are in the same connected component of $\zeta'$. 
Now by Lemma~\ref{lem:paths}(ii), $\zeta'\in \Theta_{\mmG}(F,B)$. 
Thus the set on the right is included in $\Theta_{\mmG}(F,B)$, and it is  straightforward to show that their image by $\psi_F$  is $\zeta$.
\end{proof}

We proceed analogously to the proof of Theorem~\ref{thm:posol}. 
By \eqref{eq:decompThetaF} and Lemma \ref{lemma:PrepsiF}, we obtain the following expression corresponding to \eqref{eq:positive}:
\begin{align*}
\Upsilon_\mmG(F,B)
&= \sum\limits_{\zeta\in\Lambda_\mmG(F,G)}\pi(\zeta\cap \mmE^+)\prod\limits_{e\in\zeta\cap \mmE^-}\left(\pi(e)+\sum\limits_{e'\in \mu(e)}\pi(e')\right). 
\end{align*}
By Definition \ref{def:Pgraph}(iv), 
we deduce that $\Upsilon_\mmG(F,B)\in R_{\geq 0}$. By Proposition~\ref{prop:solCMT2}, in particular \eqref{eq:solution2}, and the hypotheses of  Theorem \ref{thm:possol2}  on the signs of the entries of $A$ and $b$,  we conclude that Theorem \ref{thm:possol2} holds.
\qed

\begin{proof}[Proof of Corollary \ref{cor:extrazeros}]
By \eqref{eq:solution2} and the remark below Proposition~\ref{prop:solCMT2}, it is enough to show that  $\Theta_\mmG(F,B\cup \{\ell\})=\emptyset$  for  $B\in \mathcal{B}^i$, $i\in\{1,\ldots,d\}$. 
Assume thus that there exists $\zeta\in \Theta_\mmG(F,B\cup \{\ell\})$ with $B\in \mathcal{B}^i$. In particular $g_\zeta(j_i)=\ell$ by  \eqref{eq:rootsforests}  and hence there exists a path from $j_i$ to $\ell$ in $\zeta$. 
By the structure of an $A$-compatible multidigraph, all nodes in this path belong to $\mmN_i$. Thus the path can be extended to a path from $j_i$ to $j$ containing a node in $\mmE^-$, but not $m+1$, contradicting condition (\hyperref[condpossol2]{*}). 
\end{proof}

\setlength\bibsep{3pt}

\end{document}